\documentclass[a4paper,fleqn, final]{cas-sc}

\usepackage[utf8]{inputenc} % set input eNoding (not needed with XeLaTeX)
\usepackage{amsmath}
\usepackage{amsfonts}
\usepackage{amssymb}
\usepackage{amsthm}
\usepackage{mathtools}
\usepackage{multirow}
\usepackage{tabularx}      
\usepackage{booktabs}      
\usepackage{array}         
\usepackage{eurosym}
\usepackage{booktabs}
\usepackage{url}
\usepackage{algorithm}
\usepackage{algpseudocode}
\usepackage{natbib}
\usepackage{pdfpages}

\restylefloat{table}
\usepackage{graphicx} % support the \iNludegraphics command and options
\usepackage[parfill]{parskip} % Activate to begin paragraphs with an empty line rather than an indent
\newtheorem{lemma}{Lemma}
\newtheorem{proposition}{Proposition}
\newtheorem{definition}{Definition}

\newtheorem{theorem}{Theorem}
\everymath{\displaystyle}

\everymath{\displaystyle}
\usepackage{xcolor}

\usepackage{amsmath}
\usepackage{amsfonts}
\usepackage{amssymb}

\usepackage[authormarkup=footnote,commandnameprefix=always]{changes}

\definechangesauthor[color=magenta]{Ruan}
\definechangesauthor[color=red]{Anthony}
\definechangesauthor[color=blue]{Mehdi}

\newcommand{\taglabel}[1]{\tag{#1}\label{eq:#1}}

\begin{document}

\let\WriteBookmarks\relax
\def\floatpagepagefraction{1}
\def\textpagefraction{.001}

% Short title
\shorttitle{Models and Algorithms for Reserve Deliverability in Cross-Zonal Balancing Capacity Markets}    

% Short author
\shortauthors{Mehdi Madani, Zejun Ruan, Anthony Papavasiliou}  

% Main title of the paper
\title [mode = title]{Models and Algorithms for Reserve Deliverability in Cross-Zonal Balancing Capacity Markets}  

% Title footnote mark
% eg: \tnotemark[1]
%\tnotemark[1] 

% Title footnote 1.
% eg: \tnotetext[1]{Title footnote text}
%\tnotetext[1]{} 

% First author
%
% Options: Use if required
% eg: \author[1,3]{Author Name}[type=editor,
%       style=chinese,
%       auid=000,
%       bioid=1,
%       prefix=Sir,
%       orcid=0000-0000-0000-0000,
%       facebook=<facebook id>,
%       twitter=<twitter id>,
%       linkedin=<linkedin id>,
%       gplus=<gplus id>]

\author[1]{Mehdi Madani}%[<options>]

% Corresponding author indication
\cormark[1]

% Footnote of the first author
%\fnmark[1]

% Email id of the first author
\ead{mma@n-side.com}

% URL of the first author
%\ead[url]{}

% Credit authorship
% eg: \credit{Conceptualization of this study, Methodology, Software}
\credit{}

% Corresponding author text
\cortext[1]{Corresponding author}

% Footnote text
%\fntext[1]{The views and opinions expressed in this article  do not necessarily reflect those of N-SIDE.}
% Address/affiliation
\affiliation[1]{organization={N-SIDE},
            addressline={Courbevoie 13}, 
            city={Louvain-la-Neuve},
%          citysep={}, % Uncomment if no comma needed between city and postcode
            postcode={1348}, 
            state={},
            country={Belgium}}

\author[2]{Zejun Ruan}%[]
\ead{zruan@mail.ntua.gr}

\author[2]{Anthony Papavasiliou}%[]

% Footnote of the second author
%\fnmark[2]

% Email id of the second author
\ead{papavasiliou@mail.ntua.gr}

% URL of the second author
%\ead[url]{}

% Credit authorship
\credit{}

% Address/affiliation
\affiliation[2]{organization={National Technical University of Athens},
            addressline={Iroon Polytechneiou 9, Zografou}, 
            city={Athens},
%          citysep={}, % Uncomment if no comma needed between city and postcode
            postcode={157 72}, 
            state={},
            country={Greece}}

\begin{abstract}
In power markets where the physics of the transmission grid is closely represented in market clearing models, certain cross-zonal power exchanges can take place only if other exchanges occur concurrently. This leads to challenges in cross-zonal balancing capacity markets, where the activation of reserves in real time remains uncertain. Ensuring reserve deliverability in all activation scenarios is naturally modeled as a stochastic programming (SP) problem, whose size grows exponentially with the number of locations in the network. This formulation scales poorly in real-world applications. We first show that activation scenarios can be expressed in an order-book-agnostic way, reducing the challenge to a network modeling problem and improving computational performance. We then introduce a general inner approximation principle that we use to derive two scalable inner approximations and one column generation algorithm for tackling one of the approximations. The first inner approximation is well known to practitioners, and relates to a basic result for describing boxes in a polytope, while the second model, its associated column generation algorithm and finite-dimensional reformulation are based on semi-infinite linear programming and robust linear optimization. We compare the inner approximations to the exact SP formulation, which we also solve via a Dantzig-Wolfe decomposition for comparison purposes. Numerical results show that these inner approximations are much more scalable than the stochastic programming formulation, while reaping most of the benefits of cross-zonal exchanges. The approaches are of particular interest for future pan-European cross-zonal balancing capacity markets, and can also accommodate co-optimization of energy and balancing capacity products.
\end{abstract}

\begin{keywords}
 Reserve deliverability \sep Balancing capacity markets \sep Semi-infinite linear programming \sep Robust linear optimization \sep OR IN ENERGY

\end{keywords}

%{\let\newpage\relax\maketitle}
\maketitle

%\newpage
%\tableofcontents

\section{Introduction}

\subsection{Context and Contributions}

Reserves play a critical role in ensuring the safe and reliable operation of the transmission grid and security of supply. A key aspect of this responsibility is to maintain a real-time balance between electricity generation and consumption. To achieve this, System Operators rely on reserve commitments from generators or consumers to adapt their generation or consumption as needed. 

In Europe, reserve (also referred to as balancing capacity) markets are currently cleared separately from energy markets, and are mainly national markets. One exception is the Nordic region, see \cite{NbmHandbookCapacityMarket}, and the Baltic countries, see \cite{BBCM}, where cross-zonal balancing capacity markets are in place. In the upcoming years, a wider harmonization in the cross-zonal procurement of balancing capacity at a pan-European scale is expected. 

In this European context, reserve deliverability refers to a requirement introduced by European Transmission System Operators (see \cite{IIA, sdac_roadmap_study}). Reserve deliverability requires that System Operators must be able to  activate in real time any percentage of the balancing capacity that is procured cross-zonal in day-ahead markets, without violating network constraints.

Alternative reserve deliverability notions have been considered in US markets. In these markets, reserve requirements are often defined at a zonal or regional level, despite energy being cleared at nodal level. Reserve deliverability in these market clearing models must be secured in specific contingency scenarios. 
We refer the reader interested in the US practice to  \cite{chen2014incorporating}, which proposes ensuring reserve deliverability for specific post-contingency scenarios by explicitly imposing ``Post Zonal Reserve Deployment Transmission Constraints''.  A related earlier study \cite{zheng2008contingency} introduces a zonal reserve model derived from contingency event simulations in predefined reserve zones. Additionally, \cite{wu2021market} refines the approach in \cite{chen2014incorporating} and explores its market implications. 

Recently, \cite{vanCaelenberg2027constructing} has proposed to use Adaptive Robust Optimization to construct reserve deployment scenarios, improving real-time reserve deliverability. Further review of the literature and details on the enforcement of zonal reserve deliverability in U.S. markets are available in the cited references.

In this paper, we concentrate on an alternative formulation of reserve deliverability which does not concentrate on specific activation (deployment) scenarios but aims to accommodate \emph{any} possible configuration of reserve activation in real time. The requirement has been formulated based on our collaboration with European Transmission System Operators, with the aim of integrating this functionality in European market clearing algorithms for balancing capacity markets or for co-optimization in the Single Day-ahead Coupling.

This problem is referred to among European industry stakeholders as the ``deterministic reserve deliverability requirement" or more simply the ``deterministic requirement", in the sense that reserve deliverability should hold deterministically, in all scenarios, and not with a given probability, for a subset of activation scenarios. To the best of our knowledge, despite being intrinsically interesting and with practical applications to cross-zonal balancing capacity markets, this version of the problem has not been studied in the academic literature. On the other hand, it has received only a superficial treatment so far in the professional literature, see e.g., \cite{IIA, sdac_roadmap_study}.

An illustration of the reserve deliverability problem is provided in Figure \ref{fig:infeasible_matching}. The example assumes a single grid constraint, limiting to  $100 MW$ the capacity of the line $B-C$ in the direction $B$ to $C$, imposing the constraint $\frac{1}{3}NetPosition_A + \frac{2}{3}NetPosition_B \leq 100$. If the delivery of reserve were treated in a way as if its delivery would be required with certainty in real time, then the optimal matching of reserve bids in this example would consist of matching all the available supply (900MW). In such a matching, the 300 MW of reserve that is imported in $B$, which originates from $A$, frees up capacity on the line $B-C$ in the direction $B$ to $C$, since $\frac{1}{3}$ of the reserve exchange will flow from $A$ to $B$ through $C$. %In other words, way to see this is to note that $PTDF_A - PTDF_B=\frac{-1}{3}$. 
Such a flow would enable the market operator to import more reserve in location $C$ from location $A$ than would otherwise be possible: 600MW instead of 300MW. However, note that this matching and the network constraints are such that the reserve deliverability requirement is not satisfied. If the balancing capacity in location $B$ is eventually not activated, only 300 MW can be activated in location $C$ given that the relief from the exchange between location $A$ and location $B$ eventually does not materialize.

\begin{figure}%[H]
    \centering
    \includegraphics[width=0.6\linewidth]{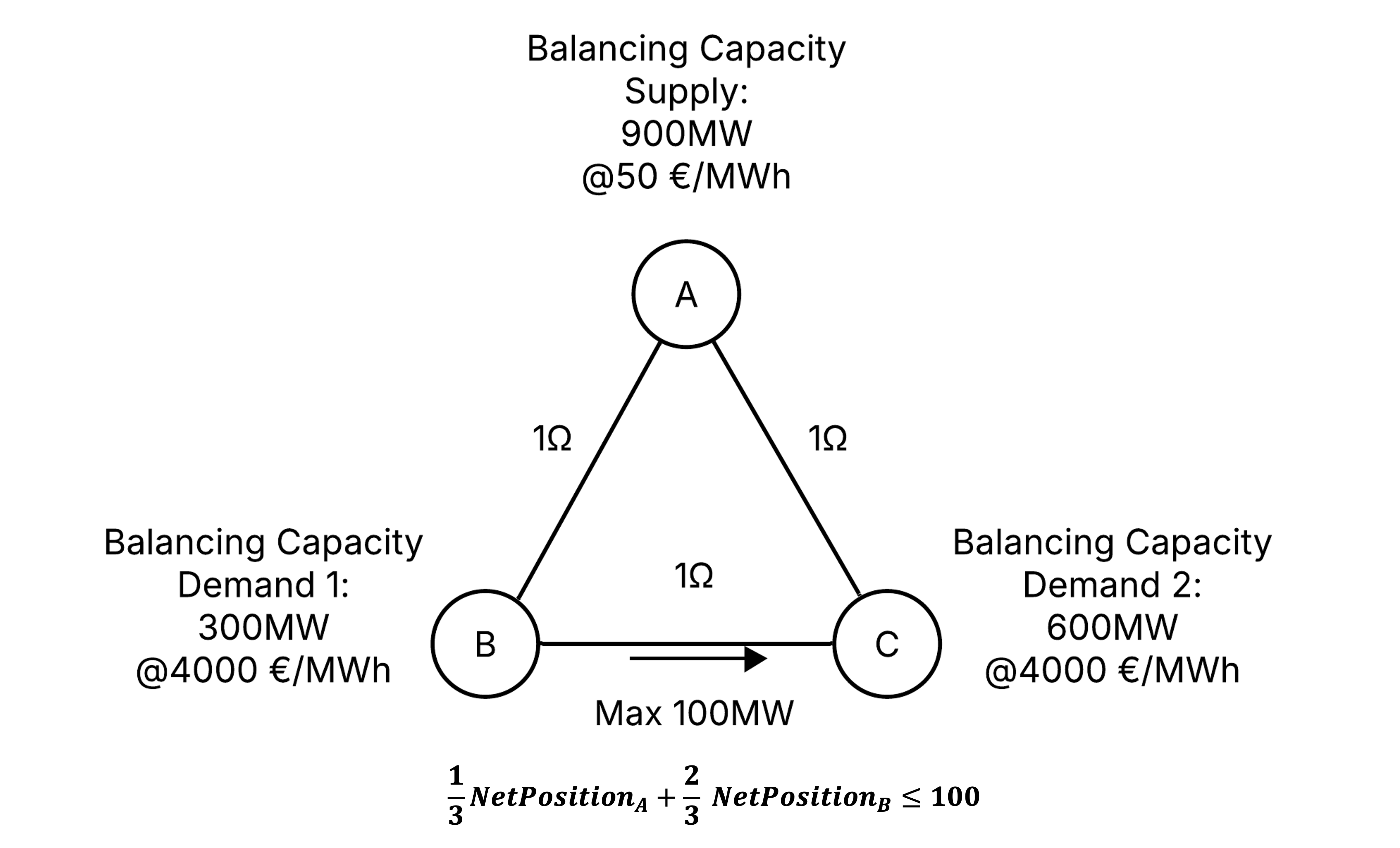}
    \caption{A three-zone example used for illustrating the problem of reserve deliverability. 
    Due to reserve deliverability constraints, it is \emph{not} possible to match the entire demand for reserve in locations B and C.} 
    \label{fig:infeasible_matching}
\end{figure}

This paper demonstrates how reserve deliverability can be efficiently enforced in linear approximations of meshed AC networks, without explicitly considering the exponential number of reserve activation scenarios. The results generalize to the case where power grids are modeled with general linear constraints on net injection variables.

The remainder of this introduction outlines the structure of the paper and summarizes the main results.

Section \ref{section:sp-formulation} formalizes the problem by providing the first ``most natural" formulation of the problem, as a stochastic linear programming formulation $InitSP$. We also introduce an alternative formulation $SP$, where the enforcement of the reserve deliverability requirement is made in an ``order-book agnostic way", where second-stage scenario-dependent decision variables only relate to the network model, and are constrained only by network-related first-stage decisions.

Section \ref{section: general-inner-approx-theorem} is central, as it introduces the general inner approximation principle at the core of the work. It allows us to generate inner approximations of $SP$ (and therefore of the reserve deliverability requirement), relying on a conceptual generalization of ad hoc practices known to practitioners to enforce the requirement in practice (which are recalled in Section \ref{section:bilateral-exchanges-formulation}). The level of generality is powerful enough to derive \emph{extensions} of these current practices, enabling to reach solutions with more welfare.

Section \ref{section:bilateral-exchanges-formulation} shows how the principle introduced in Section \ref{section: general-inner-approx-theorem} can be used to derive the current (non-exact) practice in the professional literature for enforcing reserve deliverability, see \cite{sdac_roadmap_study}, where it was first introduced (to the best of our knowledge), which we denote as $IBBE$. It is further discussed in \cite{madani2025inscribed}. 

Section \ref{section:multilateral-exchanges-formulation} further leverages the general principle introduced in Section \ref{section: general-inner-approx-theorem} to extend the $IBBE$ formulation. This leads to a new semi-infinite linear programming formulation, which we denote as $IBME$. We develop tractable computational approaches for tackling this formulation in the remaining sections.

Section \ref{section:finite-reformulation} provides a finite-dimensional reformulation of $IBME$, denoted as $FIBME$. It is derived by leveraging robust linear optimization techniques described for instance in \cite{sun2021robust}.

In Section \ref{section:cg-algo}, we tackle the semi-infinite linear programming formulation $IBME$ by a column generation algorithm. We further prove finite convergence of the proposed algorithm.

Section \ref{section:numerical-experiments} presents numerical experiments. We compare the model currently considered by Transmission System Operators, to the extensions $IBME$ and $FIBME$, and to the Dantzig-Wolfe decomposition method directly applied to the reformulation $SP$.

The results show that the inner approximations are much more computationally-efficient than $SP$, while having minimal impact on solution quality. A tradeoff is revealed between speed and accuracy of the approximations. The column generation approach solving the semi-infinite linear programming formulation $IBME$ clearly stands out, as it combines high accuracy and high performance.

We conclude in Section \ref{section:conclusions}.

\section{Reserve Deliverability as a Stochastic Linear Program} \label{section:sp-formulation}

We commence in subsection \ref{subsection:base-model} by describing the market clearing problem without considering reserve deliverability. 

Subsection \ref{subsection:InitSP} then introduces the first exact formulation $InitSP$ of the reserve deliverability requirement. The formulation fully hides the energy part of the model. Concretely, terms related to energy are moved to the 
right-hand sides of network and generator capacity constraints, and we focus on a formulation in terms of reserve variables alone in the sequel. 
The formulation directly  considers the demand activation scenarios, together with the ``best response" (recourse decisions) in terms of generator activations. This natural formulation suffers from two drawbacks that we address in this same section by providing a non-trivial reformulation of the problem. First, it scales exponentially with the number of demands and generators, since we represent the demand and generator activation variables for each extreme scenario of demand activation. Second, the formulation may not make clear that the reserve deliverability problem for the exchange of reserves is fundamentally a network modeling problem. 

We then introduce in subsection \ref{subsection:SP} the key reformulation $SP$ where scenario-dependent decisions only concern network variables and are only constrained by network-related first-stage decisions. Along the way, we also state key properties of the models used to prove the validity of reformulation $SP$.

\subsection{Base Model, Notation and Terminology} \label{subsection:base-model}

We first present the base model that co-optimizes energy and reserves without the reserve deliverability requirement. We then specialize it to a reserve-only setting, reflecting the paper's focus on balancing capacity markets. 

This section also introduces the notation and terminology used throughout the paper.

The base model is formulated as follows:
\begin{align}
& \max_{p,d,pr,dr, ne_z, nr_z} \ \ \sum_l V^l d_l + \sum_r VR^r dr_r - \sum_{g}^{}C^{g}p_{g}  \taglabel{Base1} \\
& d_l \leq D_l                  & \forall l \in Loads                                  \taglabel{Base2} \\
& dr_r \leq DR_r                & \forall r \in RDB                                  \taglabel{Base3}\\
& pr_{g} \leq P_g  - p_g       & \forall g \in G                                       \taglabel{Base4}\\
&  \sum_{l | z(l) = z} d_l - \sum_{g | z(g) = z} p_g = -ne_z & \forall z \in N           \taglabel{Base5}\\
&  \sum_{r | z(r) = z} dr_r - \sum_{g | z(g) = z} pr_g = -nr_z  & \forall z \in N        \taglabel{Base6}\\
& & \notag \\
& \sum_z ne_z = 0                                                                 \taglabel{Base7}\\
& \sum_z nr_z = 0                                                                \taglabel{Base8} \\
& \sum_{z \in  N}^{} PTDF_{k,z} nr_{z}  \leq \ F_{k}^{\max} - \sum_{z \in N}^{} PTDF_{k,z} ne_z& \forall k \in CNE \taglabel{Base9} \\
& d, dr, p, pr \geq 0 \taglabel{Base10}
\end{align}

In this base model, the expression \eqref{eq:Base1} is the welfare objective function with the utility of the accepted demands given by $\sum_l V^l d_l + \sum_r VR^r dr_r$ and system generation costs given by $ \sum_{g}^{}C^{g} p_g$. Here, $V^l$ represents the marginal value of the energy demand indexed by $l$ in the set of loads $Loads$, $VR^r$ the marginal value of the reserve demand indexed by $r$ in the set of reserve demand bids $RDB$, $d_l$ the volume of matched energy demand, $dr_r$ the matched reserve demand, $C^{g}$ the marginal cost of generator $g$, with $g$ in the set of generators $G$, and $p_g$ its energy power output. 

The constraints \eqref{eq:Base2}-\eqref{eq:Base4} are demand and generation capacity constraints, where $pr_g$ is the balancing capacity provided by generator $g$. 

Constraints \eqref{eq:Base5}-\eqref{eq:Base6} are respectively the energy and reserves locational power balance conditions, relating for each zone $z$ in the set of zones $N$, the net positions - respectively $ne_z$, $nr_z$ - to supply and demand. 

Constraints \eqref{eq:Base7}-\eqref{eq:Base8} enforce global power balances for energy and reserves, while \eqref{eq:Base9} are flow-based transmission constraints, one per critical network element $k$ in the set of critical network elements $CNE$, with  the consumption of cross-zonal capacity by the energy exchanges already moved to the right-hand side as an offset of the capacities $F_{k}^{\max}$. The coefficients $PTDF$ are standard power transfer distribution factors.

In the rest of the paper \emph{after this subsection}, we will focus on secure cross-zonal exchanges of balancing capacity, and consider the energy part as fixed. This means that we remove the variables $p,d, ne$ from the model and redefine the right-hand side of \eqref{eq:Base4}, \eqref{eq:Base9} accordingly, by setting $\widetilde{P_g} := P_g - p_g$ and $\widetilde{F_{k}^{\max}} := F_{k}^{\max}  - \sum_{z \in N}^{} PTDF_{k,z} ne_z$. However, all the developments which follow could be adapted to fit in a context where energy and balancing capacity are co-optimized.

\subsection{Initial SP Formulation - InitSP} \label{subsection:InitSP}

Let us consider the following first-stage day-ahead decisions, the description of which has been given above: $p_g, pr_g, d_l$, $dr_r$ (generation, reserve provision, energy demand and reserve demand). These decisions determine day-ahead first-stage energy net injections $ne_z$ and reserve net positions $nr_z$.  Note that in the following, the energy-related decision variables $p, d, ne$ are considered fixed.

These first-stage day-ahead decisions should be such that, for each reserve activation scenario $s$, specifying (second-stage) activation values $drAct_{r,s}$ of the reserve demands $dr_r$ (with $0 \leq drAct_{r,s} \leq dr_r$), there exist recourse (second-stage) upward generation decisions $prAct_{g,s} \leq pr_g$ leading to second-stage net injection values $nrAct_{z, s}$ that still satisfy network constraints. Note that all second-stage decisions are  indexed by $s$. 

We provide an initial stochastic programming formulation, denoted $InitSP$, described as follows:
\begin{align}
& \max_{pr,dr, nr_z} \ \ \sum_r VR^r dr_r   \taglabel{InitSP1} \\
& dr_r \leq DR_r                & \forall r \in RDB                                  \taglabel{InitSP3}\\
&  pr_{g} \leq \widetilde{P_g}         & \forall g \in G                                       \taglabel{InitSP4}\\
&  \sum_{r | z(r) = z} dr_r - \sum_{g | z(g) = z} pr_g = -nr_z  & \forall z \in N        \taglabel{InitSP6}\\
& & \notag \\
& \sum_z nr_z = 0                                                                \taglabel{InitSP8}\\
& & \notag \\
&\forall s=(s_1, ..., s_r, ..., s_{|RDB|}) \in [0;1]^{|RDB|}: & \notag \\
& & \notag \\
& drAct_{r,s} = s_r \ dr_{r}  & \forall r\in RDB                     \taglabel{InitSP10} \\
& prAct_{g,s} \leq pr_{g}   & \forall g\in G                     \taglabel{InitSP11} \\
&  \sum_{r | z(r) = z} drAct_{r,s} - \sum_{g | z(g) = z} prAct_{g,s} + nrAct_{z,s} = 0 & \forall z\in N \taglabel{InitSP12} \\
& \sum_z nrAct_{z,s} = 0 &  \taglabel{InitSP13} \\
& \sum_{z \in N}^{} PTDF_{k,z} nrAct_{z,s}  \leq \widetilde{F_{k}^{\max}} & \forall k \in CNE \taglabel{InitSP14} \\
&dr, pr, drAct, prAct \geq 0 \taglabel{InitSP15}
\end{align}

The feasible set of the stochastic formulation $InitSP$ therefore requires that the first-stage decisions $pr,dr$ and the implied net positions $nr$ be such that any possible activation scenario \\ $s=(s_1, ..., s_r, ..., s_{|RDB|}) \in [0,1]^{|RDB|}$ of the reserve demands $dr$ can be managed by the network (where $|RDB|$ denotes the number of ``Operating Reserve Demand Curve" orders, i.e., reserve demand bids).  Note our specific choice of scenario set. It is intended to describe all possible ways in which matched reserve demands could be activated, as a percentage of the quantity of reserve demand matched in the first stage of the problem.

The set of scenarios $[0;1]^{|RDB|}$ is an infinite and uncountable set of scenarios of reserve activations which maps all possible combinations of activation percentage $s[r] \in [0,1]$ of a reserve demand bid $r \in RDB$.

We will show in subsection \ref{subsection:SP} that the reserve deliverability problem can be reformulated in a way that decouples the description of the feasible exchanges of balancing capacity from the bid acceptance variables and power balance constraints. This can be achieved by  reworking constraints \eqref{eq:InitSP10}-\eqref{eq:InitSP12} in which scenarios are defined in terms of second-stage reserve demand bid activations. Instead, we propose scenarios that are purely described in terms of variables that are used to describe network constraints.

\subsection{SP Formulation (Order-book Agnostic Reformulation of $InitSP$)} \label{subsection:SP}

Compared to $InitSP$, the idea underlying the formulation $SP$ provided below is to describe the set of feasible exchanges of balancing capacity using a variable $TRD_z$.  This variable represents the \emph{potential} total matched reserve demands in location $z$ \emph{independently} from what is actually present in the order books. This total reserve demand may or may not be activated in the second stage, and the amount activated in a given scenario $s$ is denoted as $TRDAct_{z,s}$. The same logic applies to the variable $TRS_{z}$. The activation  $TRSAct_{z,s}$ that actually occurs in the second stage must be below the initial amount $TRS_{z}$. Our intent with this formulation is to isolate the model of the deterministic requirement,  
see \eqref{eq:SP6}. The complete description of the set $XSP$ is given by \eqref{eq:XSP1}-\eqref{eq:XSP9}. 

This reformulation is useful for two reasons. First, it helps to stress that the reserve deliverability problem is a network modeling problem, where the question is how to adequately represent the set of energy and reserve exchanges satisfying specific properties. This requirement can be expressed in a way which is agnostic to the order book that is submitted to the market clearing algorithm, and has to do purely with the network model. Second, from a performance point of view, it helps to reduce the number of second-stage decision variables. The second-stage reserve bid acceptances are removed and replaced by second-stage net positions and total demand or supply variables which are less numerous.

Concretely, we propose the following stochastic programming model for representing cross-zonal exchanges, which we denote SP:
\begin{align}
& \max_{dr,pr, nr} \ \ \sum_r VR^r dr_r  \taglabel{SP1} \\
& dr_r \leq DR_r                & \forall r \in RDB                                  \taglabel{SP2}\\
& pr_{g} \leq \widetilde{P_g}         & \forall g \in G                                       \taglabel{SP3}\\
&  \sum_{r | z(r) = z} dr_r - \sum_{g | z(g) = z} pr_g = -nr_z  & \forall z \in N        \taglabel{SP4}\\
& & \notag \\
& dr, pr \geq 0                                                     \taglabel{SP5}\\
& nr \in Proj_{nr}(XSP)                                                        \taglabel{SP6}
\end{align}

The set $Proj_{nr}(XSP)$ corresponds to the set of feasible exchanges of reserves. It is given by the projection on the space of the variables $nr$ of the set described by:
\begin{align}
& \sum_z nr_{z} = 0 & (First\ Stage) \taglabel{XSP1} \\
& \sum_{z \in N}^{} PTDF_{k,z} nr_{z}  \leq \ \widetilde{F_{k}^{\max}}& \forall k \in CNE \ (First\ Stage)  \taglabel{XSP2} \\
& nr_{z} = TRS_{z} - TRD_{z} & \forall z \in N \ (First\ Stage) \taglabel{XSP3} \\
& & \notag \\
&\forall s=(s_1, ..., s_n, ..., s_{|N|}) \in \{0;1\}^{|N|}:
& & \notag
\end{align}

\vspace{-0.8cm}

\begin{align} 
& TRDAct_{z,s} = s_z \ TRD_{z} &\forall z \in N \ (Second\ Stage) \taglabel{XSP4} \\
& TRSAct_{z,s} \leq TRS_{z} &\forall z \in N \  (Second\ Stage)  \taglabel{XSP5} \\
&  nrAct_{z,s} = TRSAct_{z,s} - TRDAct_{z,s} &\forall z \in N \  (Second\ Stage)  \taglabel{XSP6} \\
& \sum_z nrAct_{z,s} = 0 &\ (Second\ Stage) \taglabel{XSP7} \\
& \sum_{z\in N}^{} PTDF_{k,z} nrAct_{z,s}  \leq \ \widetilde{F_{k}^{\max}} & \forall k \in CNE\ (Second\ Stage) \taglabel{XSP8} \\
& TRS, TRD, TRSAct, TRDAct \geq 0  & \taglabel{XSP9}
\end{align}

We show below in Proposition \ref{proposition-initsp-sp} that $InitSP$ and $SP$ are equivalent in the sense that they lead to the same welfare and admit the same feasible first-stage decisions $dr, pr, nr$, i.e. $Proj_{dr, pr, nr}(InitSP) = Proj_{dr, pr, nr}(SP)$.

Before demonstrating this result, it is needed to cover a few key properties of $XSP$. These include a proposition on extreme scenarios holding both for $SP$ and $InitSP$ (Proposition \ref{prop:xsp} below), and two key properties of $SP$.

First, $SP$ relies on a finite set of scenarios $S_2 = \{s | s=(s_1, ..., s_z, ..., s_{|N|}) \in  \{0;1\}^{|N|} \}$ while $InitSP$ relies on the uncountable scenarios $S_{InitSP} = \{s | s=(s_1, ..., s_r, ..., s_{|RDB|}) \in [0;1]^{|RDB|} \}$. The first proposition below shows that considering only the finite set of extreme scenarios $S_2$ with either full activation of all the reserve demand in zone $z$ (corresponding to $s_z = 1$), or no activation at all (corresponding to $s_z=0$) is actually equivalent to considering all extreme and non-extreme scenarios in $S_1$, where the set $S_1$ is defined below. This proposition holds both for $InitSP$ and for $SP$. We prove it for the latter, for which the notation is simpler.

\begin{proposition} \label{prop:finite-set-scenarios} \label{prop:xsp}
    In the constraints \eqref{eq:XSP4}-\eqref{eq:XSP9}, the set of feasible first-stage decisions \\ $(nr, \ TRD, TRS)$ is the same whether $S_1$ or $S_2$ is used, where $S_1 = \{s | s=(s_1, ..., s_z, ..., s_{|N|}) \in [0;1]^{|N|} \}$, and \\ $S_2 = \{s | s=(s_1, ..., s_z, ..., s_{|N|}) \in \{0;1\}^{|N|} \}$.
    
    More formally, let us denote $XSP(S)$ the set $XSP$ parametrized by some scenario set $S$ defining an activation value $s_z$ per zone for each scenario $s \in S$ (a scenario being a vector of activation values). For $S_1$ and $S_2$ defined above, we have that $Proj_{(nr, \ TRD, TRS)}(XSP(S1)) = Proj_{(nr, \ TRD, TRS)}(XSP(S2))$.
\end{proposition}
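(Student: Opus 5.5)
Since $S_2 \subset S_1$, requiring the second-stage constraints for every $s\in S_1$ is at least as restrictive as requiring them for every $s\in S_2$. This gives $Proj_{(nr,TRD,TRS)}(XSP(S_1)) \subseteq Proj_{(nr,TRD,TRS)}(XSP(S_2))$ immediately. The work is the reverse inclusion. I will fix a first-stage triple $(nr, TRD, TRS)$ satisfying \eqref{eq:XSP1}--\eqref{eq:XSP3}, for which recourse variables $(TRSAct_{\cdot,s}, nrAct_{\cdot,s})$ exist for every $s \in \{0;1\}^{|N|}$. From these I will build recourse variables for an arbitrary $s \in [0;1]^{|N|}$ by convex combination.

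\textbf{Key observation.} For fixed first-stage variables, the set $R(s)$ of second-stage pairs $(TRSAct_{\cdot,s}, nrAct_{\cdot,s})$ feasible for scenario $s$ is described by constraints that are linear jointly in $s$ and the recourse variables. The scenario enters only through \eqref{eq:XSP4}, $TRDAct_{z,s} = s_z TRD_z$, which is linear in $s$ because $TRD$ is fixed. All other constraints \eqref{eq:XSP5}--\eqref{eq:XSP9} have right-hand sides independent of $s$. Hence the graph $\{(s, y) : y \in R(s)\}$ is a polyhedron, and in particular convex.

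\textbf{Construction.} Any $s \in [0;1]^{|N|}$ is a convex combination $s = \sum_{v \in \{0;1\}^{|N|}} \lambda_v v$ of the vertices of the unit cube, with $\lambda_v \ge 0$ and $\sum_v \lambda_v = 1$. One explicit choice is the product weights $\lambda_v = \prod_z s_z^{v_z}(1-s_z)^{1-v_z}$. For each vertex $v$, take the feasible recourse $(TRSAct_{\cdot,v}, TRDAct_{\cdot,v}, nrAct_{\cdot,v})$ guaranteed by membership in $XSP(S_2)$, and define
\begin{equation*}
TRSAct_{z,s} := \sum_v \lambda_v TRSAct_{z,v}, \quad TRDAct_{z,s} := \sum_v \lambda_v TRDAct_{z,v}, \quad nrAct_{z,s} := \sum_v \lambda_v nrAct_{z,v}.
\end{equation*}

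\textbf{Verification.} I then check each constraint in turn.
\begin{itemize}
\item \eqref{eq:XSP4}: $TRDAct_{z,s} = \sum_v \lambda_v v_z TRD_z = s_z TRD_z$.
\item \eqref{eq:XSP5}: the bound $TRSAct_{z,s} \le TRS_z$ holds because it is a convex combination of quantities each bounded by $TRS_z$.
\item \eqref{eq:XSP6}--\eqref{eq:XSP8}: these are linear equalities or inequalities with constant right-hand sides, so they are preserved under convex combination, using $\sum_v \lambda_v = 1$ for the right-hand sides $0$ and $\widetilde{F^{\max}_k}$.
\item \eqref{eq:XSP9}: nonnegativity is preserved under convex combination.
\end{itemize}
Thus $(nr, TRD, TRS) \in Proj(XSP(S_1))$.

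\textbf{Expected difficulty.} The only point needing care is confirming that $s$ appears nowhere except multiplied by the fixed first-stage quantity $TRD_z$. If scenarios multiplied a second-stage variable, bilinearity would break convexity. Since $TRD$ is first-stage, this is fine. The same argument transfers verbatim to $InitSP$, with $dr_r$ playing the role of $TRD_z$.
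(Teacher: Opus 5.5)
Your proof is correct and follows the paper's argument. The paper also notes that the scenario enters only through the right-hand side $s_z TRD_z$, so the right-hand sides for $S_1$ are convex combinations of those for $S_2$; it then cites Geoffrion's lemma on convexity of the set of feasible right-hand sides, whereas you build the convex combination of vertex recourse solutions explicitly (product weights plus a constraint-by-constraint check), which unpacks that lemma in this linear setting.
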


\begin{proof}
    Let us consider the scenario-dependent constraints \eqref{eq:XSP4}-\eqref{eq:XSP9} as inequalities in the second-stage scenario-dependent variables $TRDAct_{z,s}, TRSAct_{z,s}, nrAct_{z,s}$. For each scenario $s$ in a scenario set $S$, the right-hand side vector $y_s, s \in S$ of these inequalities is therefore $(s_z TRD_z, TRS_z, 0, 0, \widetilde{F_{k}^{\max}}, 0)$.

     We now use Lemma 1 in \cite{geoffrion} stating the following. Let $g$ be a convex function and let $Y := \{ y \in \mathcal{R}^m: g(x) \leq y \texttt{\ for some\ } x\}$; then $Y$ is a convex set.
    
    Therefore, the set of right-hand sides $y$ for which these inequalities admit a feasible solution is a convex set. The result then follows directly from the fact that for $S_1, S_2$ as defined above, $\{y_s, s \in S_1 \} = conv(\{y_s, s \in S_2 \})$. Concretely, if the inequalities are feasible for each $y_s, s \in S_2$, as imposed by the constraints of $SP$ if the scenario set $S_2$ is considered, they will also be feasible for each convex combination of these right-hand sides. These convex combinations exactly correspond to the right-hand sides obtained if $S_1$ is considered. Reciprocally, if these right-hand sides are feasible for all choices of values in $S_1$, they are also trivially feasible for all choices of values in $S_2$, since $S_2 \subseteq S_1$.
\end{proof}

We now derive two Lemmas which describe two key properties of $SP$. These properties are used later on, notably to show the equivalence between $InitSP$ and $SP$.

Lemma \ref{lemma-netting_total_demand_supply} below asserts that the values of $TRD_z$ and $TRS_z$ can be ``netted''. What we mean by this is that, whatever the values that these variables take which define $nr_z$, alternative values where at most one of them is non-zero, and corresponding second-stage decisions $TRDAct, TRSAct, nrAct$, can be defined that (a) leave $nr_z$ unchanged, and (b) such that the new point still belongs to $XSP$.

Lemma \ref{lemma-shifting_total_demand_supply} states that, at any zone $z$, we can add a constant $k$ to the variables representing the local supply $TRS^*_z$ and local demand $TRD^*_z$, and adapt accordingly the second-stage decisions $TRDAct, TRSAct$, such that (a) $nr_z, nrAct_z$ are left unchanged, and (b) such that the new point still belongs to $XSP$.

\begin{lemma}[Key SP Property 1 - Netting Lemma]  \label{lemma-netting_total_demand_supply}

    Let $(nr^*, TRD^*, TRS^*)$ \\ $\in Proj_{(nr, TRD, TRS)}(XSP) $, and $z$ be any location, and consider the following netted values for $TRD_z, TRS_z$ which leave the zonal net position $nr_z$ unchanged: 
    
    \begin{align*}
       & TRD_z^\#:= -\min(nr^*_z,0) \taglabel{XSP-Netting1} \\
       & TRS_z^\#:= \max(nr^*_z,0) \taglabel{XSP-Netting2}  \\
       & TRD_l^\#:= TRD^*_l & \forall l \neq z \taglabel{XSP-Netting3}  \\
       & TRS_l^\#:= TRS^*_l & \forall l \neq z \taglabel{XSP-Netting4} 
    \end{align*}

    Then $(nr^{\#}, TRD^{\#}, TRS^{\#})$ also belongs to $Proj_{(nr, TRD, TRS)}(XSP)$, i.e. can be complemented with values for the scenario-dependent variables  $TRDAct^{\#}_{z,s}$, $TRSAct^{\#}_{z,s}$, $nrAct^{\#}_{z,s}$ to obtain a feasible point of $XSP$.

\end{lemma}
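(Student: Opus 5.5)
The plan is to keep the first-stage net positions and the data of all other zones fixed, and to build the second-stage decisions for the netted point scenario by scenario. To do this I will combine two second-stage solutions of the original point by a convex combination.

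\emph{Setup.} By Proposition \ref{prop:xsp} it suffices to treat scenarios $s\in S_2$. Fix such an $s$. Let $s^1$ (resp.\ $s^0$) be the scenario equal to $s$ on every zone $l\neq z$ and with $s^1_z=1$ (resp.\ $s^0_z=0$). Since $(nr^*,TRD^*,TRS^*)\in Proj(XSP)$, there exist feasible second-stage solutions $A$ for $s^1$ and $B$ for $s^0$.
\begin{itemize}
\item In $A$, zone $z$ satisfies $nrAct^A_z = TRSAct^A_z - TRD^*_z \in[-TRD^*_z,\, nr^*_z]$.
\item In $B$, $nrAct^B_z = TRSAct^B_z\in[0,TRS^*_z]$.
\end{itemize}
For $\lambda\in[0,1]$, the combination $\lambda A+(1-\lambda)B$ has the following properties.
\begin{itemize}
\item Constraints \eqref{eq:XSP7}, \eqref{eq:XSP8} hold, because they are linear with a scenario-independent right-hand side.
\item For every $l\neq z$, constraints \eqref{eq:XSP4}--\eqref{eq:XSP6} hold, because $s^1_l=s^0_l=s_l$ and these constraints are convex in the second-stage variables for fixed first-stage values. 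Those first-stage values are unchanged at $l\neq z$ by \eqref{eq:XSP-Netting3}--\eqref{eq:XSP-Netting4}.
\item $nrAct_z$ moves continuously between $nrAct^A_z$ and $nrAct^B_z$.
\end{itemize}

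\emph{Reduction to one interval.} The only remaining task is to produce some feasible combination whose zone-$z$ value $v:=nrAct_z$ lies in the interval $I_s$ that the netted variables allow for scenario $s$. We then set $TRDAct^\#_{z,s}=s_z TRD^\#_z$ and $TRSAct^\#_{z,s}=v+s_zTRD^\#_z$. These must satisfy $0\le TRSAct^\#_{z,s}\le TRS^\#_z$, so the target interval is as follows.
\begin{itemize}
\item If $nr^*_z\ge 0$, then $TRD^\#_z=0$ and $TRS^\#_z=nr^*_z$, so $I_s=[0,nr^*_z]$ for both values of $s_z$.
\item If $nr^*_z<0$, then $TRS^\#_z=0$ and $TRD^\#_z=-nr^*_z$, so $I_s=\{-s_zTRD^\#_z\}$. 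This is $\{0\}$ when $s_z=0$ and $\{nr^*_z\}$ when $s_z=1$.
\end{itemize}

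\emph{Intermediate value step.} In every case $I_s$ contains a point $t$ with $nrAct^A_z\le nr^*_z$ compared against $t$ on one side and $nrAct^B_z\ge 0$ on the other.
\begin{itemize}
\item If $nr^*_z\ge 0$: if $nrAct^A_z\in I_s$ or $nrAct^B_z\in I_s$ we are done. Otherwise $nrAct^A_z<0\le nr^*_z<nrAct^B_z$. The segment then crosses $t=0\in I_s$, and we pick $\lambda$ so the combination equals $0$.
\item If $nr^*_z<0$: we have $nrAct^A_z\le nr^*_z<0\le nrAct^B_z$. The segment therefore passes through both $0$ and $nr^*_z$, so the required single value is attained for either value of $s_z$.
\end{itemize}
Concatenating these choices over all $s\in S_2$ gives feasible second-stage variables for the netted point, which proves the lemma.

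The main obstacle is realising that the single original scenario with the same $s$ does not suffice. For instance, with $s_z=1$ the original zone-$z$ value may be negative even though the netted variables force it into $[0,nr^*_z]$. The key step is therefore the convexity and continuity argument using both scenarios $s^0$ and $s^1$, which agree off zone $z$. I expect that checking the constraints at zones $l\neq z$ survive the convex combination is routine.
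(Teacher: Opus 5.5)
Your proof is correct and follows essentially the same route as the paper's. The paper also splits on the sign of $nr^*_z$, passes to the complementary scenario obtained by flipping $s_z$, and uses convexity of the set of feasible zone-$z$ net positions (Geoffrion's Lemma 1 applied to the system \eqref{eq:XSP4N}--\eqref{eq:XSP9N}) to bring $nrAct_{z,s}$ into $[nr^*_z,0]$ or $[0,nr^*_z]$ before netting; your explicit convex combination of the two scenario solutions, with the zone-$z$ variables overwritten by the netted ones, merges those two steps into a single direct construction per scenario.
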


\begin{proof}
    See Appendix \ref{appendix:proofs}
\end{proof}

\begin{lemma}[Key SP Property 2 - Shifting Lemma]  \label{lemma-shifting_total_demand_supply}
    Let $(nr^*, TRD^*, TRS^*) \in Proj_{(nr, TRD, TRS)}(XSP) $, $z$ be any zone, and consider the following shifted values for $TRD_z, TRS_z$ which leave the zonal net position $nr_z$ unchanged: 
    
    \begin{align*}
       & TRD_z^\#:= TRD^*_z + K \taglabel{XSP-Shifting1} \\
       & TRS_z^\#:= TRS^*_z + K \taglabel{XSP-Shifting2}  \\
       & TRD_l^\#:= TRD^*_l & \forall l \neq z \taglabel{XSP-Shifting3}  \\
       & TRS_l^\#:= TRS^*_l & \forall l \neq z \taglabel{XSP-Shifting4} 
    \end{align*}

    where $TRD^*_z$ and $TRS^*_z$ have been increased by the same constant $K$. Then $(nr^*, TRD^{\#}, TRS^{\#})$ also belongs to $Proj_{(nr, TRD, TRS)}(XSP)$, i.e. can be complemented with values for the scenario-dependent variables  $TRDAct^{\#}_{z,s}$, $TRSAct^{\#}_{z,s}$, $nrAct^*_{z,s}$ to obtain a feasible point of $XSP$.

\end{lemma}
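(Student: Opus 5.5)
The plan is a direct construction: keep every second-stage net position $nrAct^*_{\cdot,s}$ exactly as it is, and absorb the shift $K$ inside zone $z$ only. Constraints \eqref{eq:XSP7}-\eqref{eq:XSP8} involve only the $nrAct$ variables and \eqref{eq:XSP1}-\eqref{eq:XSP2} only $nr$. Since $nr^*_z = TRS^\#_z - TRD^\#_z$ is unchanged, all these constraints hold automatically. What remains is to satisfy \eqref{eq:XSP3}-\eqref{eq:XSP6} and \eqref{eq:XSP9} for zone $z$ in each scenario $s \in S_2$. Zones $l \neq z$ keep their starred values. I treat $K \geq 0$ first and then reduce the case $K<0$ (with $TRD^\#_z, TRS^\#_z \geq 0$) to it.

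\emph{Case $K \geq 0$.} Take a feasible completion $(TRDAct^*, TRSAct^*, nrAct^*)$ of $(nr^*,TRD^*,TRS^*)$ and fix a scenario $s$.
\begin{itemize}
\item If $s_z = 0$, set $TRDAct^\#_{z,s} = 0$ and $TRSAct^\#_{z,s} = TRSAct^*_{z,s}$. Then \eqref{eq:XSP4} and \eqref{eq:XSP6} still hold, and $TRSAct^\#_{z,s} \leq TRS^*_z \leq TRS^*_z + K$.
\item If $s_z = 1$, set $TRDAct^\#_{z,s} = TRD^*_z + K$ and $TRSAct^\#_{z,s} = TRSAct^*_{z,s} + K$. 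Then $TRSAct^\#_{z,s} - TRDAct^\#_{z,s} = nrAct^*_{z,s}$, so \eqref{eq:XSP6} holds. Moreover $TRSAct^\#_{z,s} \leq TRS^*_z + K = TRS^\#_z$, so \eqref{eq:XSP5} holds, and nonnegativity is preserved because $K \geq 0$.
\end{itemize}
This gives a feasible point of $XSP$ for every $s \in S_2$, which proves the claim.

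\emph{Case $K<0$.} Here the naive construction can fail in scenarios with $s_z = 1$: $TRSAct^*_{z,s} + K$ may become negative. I avoid this by first applying Lemma \ref{lemma-netting_total_demand_supply} at zone $z$. This yields a feasible point with $TRD_z = -\min(nr^*_z,0)$ and $TRS_z = \max(nr^*_z,0)$, which are precisely $TRD^*_z - m$ and $TRS^*_z - m$ with $m = \min(TRD^*_z, TRS^*_z)$. The target values $TRD^*_z + K$ and $TRS^*_z + K$ are obtained from the netted point by a shift $K' = m + K$. This $K'$ is nonnegative exactly when the shifted values are nonnegative, which is the only meaningful range because of \eqref{eq:XSP9}. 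The first case applied to the netted point with shift $K'$ then concludes.

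The main obstacle is this negative-shift case, and in particular pinning down the admissible range of $K$ (the statement implicitly needs $TRD^\#_z, TRS^\#_z \geq 0$). Passing through the netting lemma handles it cleanly. The $K \geq 0$ construction itself is routine bookkeeping.
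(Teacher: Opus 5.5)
Your construction for $K \ge 0$ is exactly the paper's proof: all $nrAct^*_{\cdot,s}$ and all zones $l \neq z$ are kept, with $TRSAct^\#_{z,s} := TRSAct^*_{z,s}+K$ when $s_z=1$ and $TRSAct^\#_{z,s} := TRSAct^*_{z,s}$ when $s_z=0$. The paper never treats $K<0$: the lemma speaks of an increase, and its only use, after netting in the proof of Proposition~\ref{proposition-initsp-sp}, needs a nonnegative shift. Your reduction of that case to the shift $K' = m+K \ge 0$ from the netted point is therefore a correct, non-circular extension. Note, though, that for $K<0$ the completion can no longer reuse the given $nrAct^*$, since the Netting Lemma may change it; only the projection claim survives there.
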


\begin{proof}
    See Appendix \ref{appendix:proofs}
\end{proof}

\begin{proposition} \label{proposition-initsp-sp}
    $Proj_{dr, pr, nr}(InitSP) = Proj_{dr, pr, nr}(SP)$. In other words, both $InitSP$ and $SP$ admit the same set of key first-stage decisions $dr, pr, nr$, i.e. day-ahead bid acceptances and cross-zonal exchanges.
\end{proposition}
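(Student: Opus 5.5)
The plan is to prove the two inclusions separately. For $Proj_{dr,pr,nr}(SP)$, Proposition \ref{prop:xsp} lets us work with either the binary scenario set $S_2$ or the continuous set $S_1$. For $Proj_{dr,pr,nr}(InitSP)$ we use the zonal aggregation map
\[
TRD_z := \sum_{r | z(r)=z} dr_r, \qquad TRS_z := \sum_{g | z(g)=z} pr_g .
\]
By \eqref{eq:InitSP6} (equivalently \eqref{eq:SP4}), this map satisfies $nr_z = TRS_z - TRD_z$.

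\textbf{Inclusion $InitSP \subseteq SP$.} Take $(dr,pr,nr)$ feasible for $InitSP$ and define $TRD,TRS$ by aggregation as above. Then \eqref{eq:XSP1}--\eqref{eq:XSP3} hold, since they are \eqref{eq:InitSP8} and the first-stage network constraints. Note that $nr$ itself satisfies \eqref{eq:XSP2}: it is $nrAct$ for the $InitSP$ scenario $s=\mathbf{1}$ with $prAct=pr$.

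Given a binary zonal scenario $s\in S_2$, lift it to the $InitSP$ scenario $\hat s_r := s_{z(r)}$ and take the recourse $prAct_{g,\hat s}$ that $InitSP$ guarantees. Set
\[
TRDAct_{z,s} := \sum_{r|z(r)=z} drAct_{r,\hat s} = s_z TRD_z, \qquad TRSAct_{z,s} := \sum_{g|z(g)=z} prAct_{g,\hat s} \le TRS_z, \qquad nrAct_{z,s} := nrAct_{z,\hat s}.
\]
Constraints \eqref{eq:XSP4}--\eqref{eq:XSP9} then follow by summing \eqref{eq:InitSP10}--\eqref{eq:InitSP15} zone by zone. Hence $nr\in Proj_{nr}(XSP)$, and $(dr,pr,nr)$ is feasible for $SP$.

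\textbf{Inclusion $SP \subseteq InitSP$.} This is the harder direction. The difficulty is that the witness $(TRD^*,TRS^*)$ certifying $nr\in Proj_{nr}(XSP)$ need not coincide with the order-book aggregates. The first step is to normalise it. Apply Lemma \ref{lemma-netting_total_demand_supply} zone by zone to reach the netted values $TRD_z=\max(-nr_z,0)$ and $TRS_z=\max(nr_z,0)$. Then apply Lemma \ref{lemma-shifting_total_demand_supply} in each zone with
\[
K_z := \min\Big(\sum_{r|z(r)=z} dr_r,\ \sum_{g|z(g)=z} pr_g\Big) \ge 0 .
\]
Because $nr_z = \sum_{g} pr_g - \sum_{r} dr_r$, with both sums over zone $z$, this yields exactly the aggregates $TRD_z=\sum_{r|z(r)=z} dr_r$ and $TRS_z=\sum_{g|z(g)=z} pr_g$, still inside $Proj(XSP)$.

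Next, invoke Proposition \ref{prop:xsp} so that feasibility holds for every continuous zonal scenario in $S_1$. Given an arbitrary $InitSP$ scenario $s\in[0,1]^{|RDB|}$, define the zonal activation fraction
\[
\sigma_z := \frac{\sum_{r|z(r)=z} s_r\, dr_r}{TRD_z} \in [0,1],
\]
with $\sigma_z:=0$ when $TRD_z=0$. The $XSP$ recourse for $\sigma$ provides $TRSAct_{z,\sigma}\le TRS_z$ and network-feasible $nrAct_{z,\sigma}$. Disaggregate the supply activation proportionally:
\[
prAct_{g,s} := pr_g\, \frac{TRSAct_{z(g),\sigma}}{TRS_{z(g)}},
\]
which is well defined because $TRSAct=0$ whenever $TRS=0$, and which satisfies $prAct_{g,s}\le pr_g$. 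Also set $drAct_{r,s}=s_r dr_r$.

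The zonal balance \eqref{eq:InitSP12} now reduces to \eqref{eq:XSP6} with $TRDAct_{z,\sigma}=\sigma_z TRD_z=\sum_{r|z(r)=z} s_r dr_r$. Constraints \eqref{eq:InitSP13}--\eqref{eq:InitSP14} are inherited from \eqref{eq:XSP7}--\eqref{eq:XSP8}. This shows $(dr,pr,nr)$ is feasible for $InitSP$.

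The main obstacle is this normalisation of the witness via the two lemmas, together with the passage from $S_2$ to fractional zonal scenarios through Proposition \ref{prop:xsp}. Once these are in place, the remaining steps are routine aggregation and disaggregation.
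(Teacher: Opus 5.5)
Your proposal is correct and follows essentially the same route as the paper. For $InitSP \subseteq SP$ you aggregate $dr, pr, drAct, prAct$ zone by zone over zone-uniform bid scenarios; for the converse you normalise the $(TRD, TRS)$ witness to the order-book aggregates via the netting and shifting lemmas, then invoke Proposition \ref{prop:xsp} for fractional zonal activations and disaggregate $TRSAct$ into $prAct \leq pr$. You also fill in details the paper leaves implicit: an explicit check that $nr$ satisfies \eqref{eq:XSP2}, via the forced recourse $prAct = pr$ under full activation, and an explicit proportional disaggregation for arbitrary $s \in [0,1]^{|RDB|}$.
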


\begin{proof}
    
See Appendix \ref{appendix:proofs} \end{proof}

The main challenge with the formulation $SP$ is that its size grows exponentially with the number of zones in the network. Even if it already improves on $InitSP$ whose size grows exponentially with respect to reserve demands that can be activated, and which contains decision variables for generators indexed by scenarios, it still doesn't scale well when moving to large-scale instances.

The next sections propose computationally efficient inner approximations, where the number of variables and constraints grow linearly in the number of zones. This allows us to enforce reserve deliverability in a scalable way, with very low impacts on welfare or system costs.

Before moving to these approaches, it is important to acknowledge that the $SP$ formulation above could be solved via decomposition methods, such as a Dantzig-Wolfe column generation algorithm, where we iteratively refine an inner approximation of $Proj_{nr}(XSP)$. This decomposition approach is numerically tested in Section \ref{section:numerical-experiments}. However, it doesn't remove the burden of the exponential number of scenarios which still appear in the pricing problems.

\section{A General Inner Approximation Theorem}\label{section: general-inner-approx-theorem}

In the present section, we introduce a general result for building inner approximations of $XSP$. These inner approximations will turn out to be powerful for deriving both models and algorithms based on column generation for solving $SP$. % to any desired precision. 
We will see that the current approach for modeling reserve deliverability in European studies on cross-zonal capacity, discussed in Section \ref{section:bilateral-exchanges-formulation}, is a special case of this general result.

While we focus on upward balancing capacity in the remainder of this paper, this general result can be adapted to the case of both upward and downward balancing capacity, such that standard models in the professional literature adapting the inner approximation in Section \ref{section:bilateral-exchanges-formulation} to both upward and downward balancing capacity procurement can be recovered as special cases. See \cite{PapavasiliouAvila2024} for these special cases, and the Supplementary Material on how to derive them from Theorem \ref{theorem-key-inner-approximation-theorem} adapted to upward and downward balancing capacity setups. 

The inner approximation theorem below first requires the definition of parameters that will play a key role.

\medskip

\begin{definition}[Secure Cross-zonal Capacities $W_k$] \label{def:secure-czc}
    Consider a cross-zonal exchange $(nr_1, ..., nr_{|N|})$ satisfying the initial global balance constraints \eqref{eq:XSP1} and consider a set of parameters $W_k, k\in CNE$.
    
    Let us denote by $XSP[F_{k}^{\max} \rightarrow W_k]$ the set $XSP$ obtained if $\widetilde{F_{k}^{\max}}$ is replaced by $W_k$ for each $k \in CNE$ in \eqref{eq:XSP2} and \eqref{eq:XSP8}.
    
    The parameters $W_k$ are called \emph{Secure Cross-zonal Capacities for $(nr_1, ..., nr_{|N|})$} if $(nr_1, ..., nr_{|N|}) \in Proj_{nr}(XSP[F_{k}^{\max} \rightarrow W_k])$, or in other words, if $(nr_1, ..., nr_{|N|})$ is feasible for \eqref{eq:XSP1}-\eqref{eq:XSP9} if the capacities $\widetilde{F_{k}^{\max}}$ are replaced by $W_k$.
\end{definition}

We now move to the statement of the central theorem of the paper. The result can be described in a quite intuitive way as follows. A certificate that given cross-zonal exchanges of balancing capacity satisfy the reserve deliverability requirement can be provided by showing that these exchanges can be decomposed into a specific constrained sum of cross-zonal exchanges. The constraint on the sum of these cross-zonal exchanges requires that cross-zonal capacities that would make these exchanges ``reserve deliverable" do not sum up to a value exceeding available transmission capacities (per critical network element).  The idea for proving the result relies on showing that, for any activation scenario where some reserve demands may eventually not be activated in specific locations in real time, one can simply reduce the amount of cross-zonal exchanges appearing in the constrained sum, while not exceeding the available cross-zonal capacities.

\begin{theorem}\label{theorem-key-inner-approximation-theorem}
    Consider a finite number of cross-zonal exchanges $(nr^i_1, ..., nr^i_{|N|}), i \in \{ 1, ..., T\}$ satisfying the initial global balance constraints \eqref{eq:XSP1}, and for each of these exchanges, consider a set of Secure Cross-zonal Capacities $W^i_k$ as defined in Definition \ref{def:secure-czc}.

    Then any %cross-zonal exchange 
    solution to the following set of inequalities in the variables $nr, f_i$, considering the $nr^i$ as fixed parameters, will be a secure cross-zonal exchange satisfying the reserve deliverability requirement, i.e. will belong to $Proj_{nr}(XSP)$:
%\textbf{INAP}

    \begin{align}
& \begin{pmatrix}
nr_{1} \\
\vdots \\
nr_{z} \\
\vdots \\
nr_{|N|}
\end{pmatrix}
=\sum_{i= 1, ..., T} \begin{pmatrix} 
nr^i_{1} \\
\vdots \\
nr^i_{z} \\
\vdots \\
nr^i_{|N|}
\end{pmatrix} f_i \taglabel{INAP-1} \\
& \sum_{i= 1, ..., T}  W^i_k \ f_i \leq \widetilde{F_k^{max}} & \forall k \in CNE \taglabel{INAP-2}\\
& f \geq 0 \taglabel{INAP-3}
\end{align}
\end{theorem}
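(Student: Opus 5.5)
The plan is to prove the statement by a direct \emph{conic-combination} (superposition) argument in the lifted space of $XSP$: feasible solutions of the systems $XSP[F_{k}^{\max} \rightarrow W^i_k]$, one for each exchange $nr^i$, are combined with the weights $f_i$. Fix a solution $(nr, f)$ of \eqref{eq:INAP-1}--\eqref{eq:INAP-3}. By Definition \ref{def:secure-czc}, for each $i \in \{1,\dots,T\}$ there exist first-stage values $TRD^i, TRS^i \geq 0$ with $nr^i_z = TRS^i_z - TRD^i_z$, and second-stage values $TRDAct^i_{z,s}, TRSAct^i_{z,s}, nrAct^i_{z,s}$ for every $s \in S_2=\{0;1\}^{|N|}$. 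These values satisfy \eqref{eq:XSP1}--\eqref{eq:XSP9} with $\widetilde{F_{k}^{\max}}$ replaced by $W^i_k$. We work only with the finite scenario set $S_2$, which is the set used in the definition of $XSP$. By Proposition \ref{prop:xsp}, nothing changes if $S_1$ is used instead.

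Next, we define the candidate certificate for $nr$:
\begin{align*}
TRD_z := \sum_i f_i TRD^i_z, \quad TRS_z := \sum_i f_i TRS^i_z, \quad TRDAct_{z,s} := \sum_i f_i TRDAct^i_{z,s},
\end{align*}
and similarly $TRSAct_{z,s} := \sum_i f_i TRSAct^i_{z,s}$ and $nrAct_{z,s} := \sum_i f_i nrAct^i_{z,s}$. We then check each constraint of $XSP$ in turn.
\begin{itemize}
\item The equality constraints \eqref{eq:XSP1}, \eqref{eq:XSP3}, \eqref{eq:XSP6}, \eqref{eq:XSP7} are linear and homogeneous, so they hold for the combination. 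Together with \eqref{eq:INAP-1}, this gives $nr_z = TRS_z - TRD_z$.
\item Constraint \eqref{eq:XSP4} holds because the scenario $s$ is the \emph{same} vector in every component $i$: $\sum_i f_i s_z TRD^i_z = s_z TRD_z$.
\item Constraints \eqref{eq:XSP5} and \eqref{eq:XSP9} are preserved because $f \geq 0$.
\item For the network constraints \eqref{eq:XSP2} and \eqref{eq:XSP8}, linearity and $f\geq 0$ give
\begin{align*}
\sum_z PTDF_{k,z}\, nrAct_{z,s} = \sum_i f_i \sum_z PTDF_{k,z}\, nrAct^i_{z,s} \leq \sum_i f_i W^i_k \leq \widetilde{F_k^{max}},
\end{align*}
where the last inequality is \eqref{eq:INAP-2}. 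The same chain applies to the first-stage constraint \eqref{eq:XSP2} with $nr$ in place of $nrAct_{\cdot,s}$.
\end{itemize}
Hence $nr \in Proj_{nr}(XSP)$.

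The main point needing care is the scenario-dependent constraint \eqref{eq:XSP4}. The combination must be taken scenario by scenario, using the same activation vector $s$ in every component, and the homogeneity of $TRDAct_{z,s} = s_z TRD_z$ in $TRD$ is exactly what makes this legitimate. Nonnegativity of $f$ is essential for the inequality constraints, and \eqref{eq:INAP-2} is used only at the final step. No netting is needed, so Lemmas \ref{lemma-netting_total_demand_supply} and \ref{lemma-shifting_total_demand_supply} are not required here.
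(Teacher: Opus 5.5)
Your proposal is correct and is essentially the paper's own proof: the paper likewise takes, for each $i$, a certificate of $nr^i \in Proj_{nr}(XSP[F_{k}^{\max} \rightarrow W^i_k])$, defines $TRD, TRS, TRDAct, TRSAct, nrAct$ as the $f$-weighted sums of these certificates, and concludes via the same chain $\sum_z PTDF_{k,z}\, nrAct_{z,s} = \sum_i f_i \sum_z PTDF_{k,z}\, nrAct^i_{z,s} \leq \sum_i W^i_k f_i \leq \widetilde{F_k^{max}}$. Your constraint-by-constraint check, in particular the scenario-wise homogeneity of $TRDAct_{z,s} = s_z TRD_z$ and the use of $f \geq 0$ for the inequalities, spells out what the paper calls a ``direct verification.''
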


\begin{proof}
    See Appendix \ref{appendix:proofs}
\end{proof}

Note that inner approximations built using Theorem \ref{theorem-key-inner-approximation-theorem} can be exact. Suppose that $(nr_1^*, ..., nr_{|N|}^*)$ is part of an optimal solution to $SP$. Then Theorem \ref{theorem-key-inner-approximation-theorem} provides an exact inner approximation if this optimal solution is one of the exchange patterns indexed by $i$ in the theorem, with $W_k^* := \widetilde{F_k^{max}}, \ \forall k \in CNE$ (since the point belongs to $XSP$, the original capacities $\widetilde{F_k^{max}}$ are secure cross-zonal capacities in the sense of Definition \ref{def:secure-czc}).

Also, advanced column generation algorithms leveraging Theorem \ref{theorem-key-inner-approximation-theorem} can be designed. Indeed, for any vector of cross-zonal exchanges $(nr_1, ..., nr_{|N|})$, one can determine corresponding (non-unique) secure capacities $W_k$ and then add this vector to the list of secure exchange patterns used to build the inner approximation. Dedicated column generation algorithms should therefore focus on which exchange patterns $(nr_1, ..., nr_{|N|})$ to iteratively add for building a sequence of inner approximations converging to an optimal solution to $SP$. Exact approaches that can be developed in that spirit, although beyond the scope of the present work, are shortly discussed in Section \ref{section:cg-algo}.

In the rest of the paper, we focus on specific approximation models (see Sections \ref{section:bilateral-exchanges-formulation}, \ref{section:multilateral-exchanges-formulation}, \ref{section:finite-reformulation}), or custom column generation %-like 
algorithms (see Section \ref{section:cg-algo}), which are easy to derive from Theorem \ref{theorem-key-inner-approximation-theorem}, and which therefore enforce the reserve deliverability requirement. Although they enforce the requirement in a conservative manner, numerical experiments in Section \ref{section:numerical-experiments} show that they reach near-optimal solutions while being much more scalable than directly solving $SP$.

\section{Inner approximation of reserve deliverability by inscribed boxes in the space of bilateral exchanges}
\label{section:bilateral-exchanges-formulation}
%\label{section:net-injections-formulation}

The model derived in this section, referred to as $IBBE$, corresponds to the model $O3$ in \cite{sdac_roadmap_study} and is the method currently considered by practitioners, see also \cite{PapavasiliouAvila2024} for an extension considering both upward and downward reserves (that can also be obtained as a special case of Theorem \ref{theorem-key-inner-approximation-theorem} after adequate adaptations to handle both upward and downward balancing capacity procurement, see the Supplementary Material). 

It directly relates to three different applications in European and US markets. Firstly, the model is used by practitioners to enforce ``intuitiveness" in European flow-based market coupling. See, for instance, page 47 of \cite{cwe2013}. The approach is also used for extracting ATC domains that can be inscribed within flow-based domains. See, for instance, page 11 of \cite{cwe2020atc}. Finally, the approach can be used in order to enforce the satisfaction of the so-called Simultaneous Feasibility Test in auctions for FTR \emph{options}. See, for instance, the discussion of Figure 3 in \cite{oren2012point}. See also the constraints of long-term flow-based auctions of long-term transmission rights in page 26, Article 41 of \cite{acer2023lttr}.

The model is obtained in two steps. First, auxiliary bilateral exchange variables are introduced. These auxiliary variables enable us to decompose reserve exchanges across zones into a sum of bilateral exchanges:
\begin{equation}
    nr_n = \sum_{z \neq n} (f_{n,z} -  f_{z,n}), \quad n \in N 
\end{equation}

These variables are then substituted for the variables $nr$ in the original flow-based (or DC power flow) constraints \eqref{eq:Base9}, which are reproduced here for convenience, considering the redefinition of the right-hand sides as $\widetilde{F_{k}^{\max}}$ introduced in Section \ref{subsection:base-model}:
%\begin{equation}\label{flowsubstitution}
$\sum_{z \in N}^{}PTDF_{k,z}  nr_z \leq \ \widetilde{F_{k}^{\max}}, \forall k \in CNE$.
%end{equation}

This yields:
\begin{equation}\label{zone-to-zone}
    \sum_{n,z \in N}^{}\left( PTDF_{kn} - PTDF_{kz} \right) {\ f}_{nz} \leq \ \widetilde{F_{k}^{\max}}\ ,\quad k \in CNE 
\end{equation}

In a second step, we invoke Lemma 1 of \cite{bemporad2004inner}. This allows us to describe all boxes fitting within the polyhedron described by equation \eqref{zone-to-zone} which also contain the origin of bilateral exchanges, $f_{n,z}=0 \  \forall n,z \in N$. This results in the constraints \eqref{eq:XIBBE3} below, where $(PTDF_{kn} - PTDF_{kz})$ in equation \eqref{zone-to-zone} is replaced by $\max(PTDF_{kn}$ $- PTDF_{kz}, 0)$.

This leads us to the following formulation, denoted by $IBBE$:
\begin{align}
&  \eqref{eq:SP1}-\eqref{eq:SP5}  \taglabel{IBBE1-IBBE5}\\   
& nr \in Proj_{nr}(XIBBE)                         \taglabel{IBBE6}
\end{align}

The set $XIBBE$ is defined by the following constraints. 
\begin{align}
& \sum_n nr_{n} = 0  \taglabel{XIBBE1} \\
& nr_n = \sum_{z \neq n} (f_{n,z} -  f_{z,n}), \quad n \in N  \quad \quad [\pi rnet_n] \taglabel{XIBBE2} \\
& \sum_{n,z \in N}^{} \max\left( PTDF_{kn} - PTDF_{kz}, 0 \right){\ f}_{nz} \leq \ \widetilde{F_{k}^{\max}}\ ,\quad k \in CNE \quad \quad [SP_k] \taglabel{XIBBE3} \\
& f_{n,z} \geq 0 \taglabel{XIBBE4}
\end{align}

Note that \eqref{eq:XIBBE1} is actually implied by \eqref{eq:XIBBE2}. Nevertheless, we keep it in the description of $XIBBE$ in order to keep consistency with the other models. The model $IBBE$ enforces the reserve deliverability requirement, albeit in a too conservative way. We show this later in this section.

\begin{proposition} \label{proposition:ibbe-valid-inner-approx}
    The model $IBBE$ is an inner approximation of $SP$ in the precise sense that $Proj_{nr}(XIBBE) \subseteq Proj_{nr}(XSP)$. This means that the set of permissible cross-zonal exchanges of balancing capacity in $IBBE$ is all contained in the set of secure exchanges that are modeled in $SP$.
\end{proposition}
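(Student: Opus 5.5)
The plan is to obtain the result as a special case of Theorem \ref{theorem-key-inner-approximation-theorem}. I take as exchange patterns the elementary bilateral exchanges indexed by ordered pairs $(n,z)$ with $n\neq z$. Pattern $i=(n,z)$ is the vector $nr^{(n,z)}$ with entry $+1$ at $n$, $-1$ at $z$, and $0$ elsewhere; it satisfies \eqref{eq:XSP1}. Its weight is $f_i := f_{n,z}$. Constraint \eqref{eq:XIBBE2} is then exactly \eqref{eq:INAP-1}, and \eqref{eq:XIBBE4} is \eqref{eq:INAP-3}. It remains to exhibit secure cross-zonal capacities $W^{(n,z)}_k := \max(PTDF_{kn}-PTDF_{kz},0)$, so that \eqref{eq:XIBBE3} becomes \eqref{eq:INAP-2}. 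The theorem then gives $Proj_{nr}(XIBBE)\subseteq Proj_{nr}(XSP)$.

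The key step is to check Definition \ref{def:secure-czc}: the unit exchange $nr^{(n,z)}$ must belong to $Proj_{nr}(XSP[F_k^{\max}\rightarrow W^{(n,z)}_k])$. I choose first-stage values $TRS_n=1$, $TRD_z=1$, and all other $TRS,TRD$ equal to $0$. The first stage then satisfies \eqref{eq:XSP1} and \eqref{eq:XSP3}. For \eqref{eq:XSP2}, the flow is $PTDF_{kn}-PTDF_{kz}\le \max(PTDF_{kn}-PTDF_{kz},0)=W_k$.

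For each scenario $s\in\{0,1\}^{|N|}$, only $s_z$ matters, so there are two cases.
\begin{itemize}
\item If $s_z=1$, set $TRSAct_{n,s}=1$. This reproduces the first-stage exchange, which is feasible as above.
\item If $s_z=0$, set all activations to zero, so $nrAct_{\cdot,s}=0$. This satisfies \eqref{eq:XSP7}, and \eqref{eq:XSP8} reads $0\le W_k$, which holds because $W_k\ge 0$ by construction.
\end{itemize}
Nonnegativity \eqref{eq:XSP9} is immediate in both cases, and \eqref{eq:XSP5} holds since $TRSAct\le TRS$.

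The main obstacle is exactly this choice of $W_k$: the capacity must be nonnegative to cover the non-activation scenario, and at least the positive PTDF difference to cover the full-activation scenario. Taking the maximum of the two is what produces the $\max(\cdot,0)$ in \eqref{eq:XIBBE3}. There is also a bookkeeping point. Theorem \ref{theorem-key-inner-approximation-theorem} is stated for $\widetilde{F_k^{\max}}$, which may be negative. In that case \eqref{eq:XIBBE3} with $f\ge0$ is infeasible and the inclusion holds trivially, so no extra sign assumption is needed.
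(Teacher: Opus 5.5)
Your proposal is correct and follows the paper's own proof essentially step for step: the same bilateral patterns $e_n - e_z$ weighted by $f_{n,z}$, the same secure capacities $\max(PTDF_{kn}-PTDF_{kz},0)$ certified with $TRS_n = TRD_z = 1$ and the two scenarios $s_z \in \{0,1\}$, and then Theorem~\ref{theorem-key-inner-approximation-theorem}. Your remark about a negative $\widetilde{F_k^{\max}}$ is correct but not needed, since Theorem~\ref{theorem-key-inner-approximation-theorem} imposes no sign condition on the right-hand sides.
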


\begin{proof}

We show that the model is just a special case of the general construction discussed in Theorem \ref{theorem-key-inner-approximation-theorem}, where the cross-zonal exchanges being combined here are of the form 
$((-1)e_z + 1e_n) f_{n,z}$, where $e_j$ is the unit vector with all entries except $j$ equal to 0 and entry $j$ equal to 1.

Indeed, constraints \eqref{eq:XIBBE2} written with vector notation read:
 \begin{equation}
     \begin{pmatrix}
 nr_{1} \\
 \ldots \\
 nr_{z} \\
 \ldots \\
 nr_{|N|}
 \end{pmatrix} = \begin{pmatrix}
  - 1 \\
 1 \\
 0 \\
 \ldots \\
 0
 \end{pmatrix}f_{2,1} + \ \begin{pmatrix}
  - 1 \\
 0 \\
 1 \\
 \ldots \\
 0
 \end{pmatrix}f_{3,1} + \ldots + \begin{pmatrix}
 0 \\
 1 \\
 0 \\
 \ldots \\
  - 1
 \end{pmatrix}f_{2,|N|} + \ \begin{pmatrix}
 0 \\
 0 \\
 1 \\
 \ldots \\
  - 1
 \end{pmatrix}f_{3,|N|} + \ldots + \begin{pmatrix}
 0 \\
 0 \\
 0 \\
 1 \\
  - 1
 \end{pmatrix}f_{|N| - 1,|N|} \taglabel{XIBBE2-V}
 \end{equation}

This clearly reveals that the vector of net injections, $nr$, can be decomposed into a sum of cross-zonal exchanges of the form \eqref{eq:INAP-1} in Theorem \ref{theorem-key-inner-approximation-theorem}. 

It remains to show that for each of these exchange patterns $e_n - e_z$, $W^{n,z}_k :=\max\left( PTDF_{k,n} - PTDF_{k,z}, 0 \right)$, appearing in \eqref{eq:XIBBE3} -  corresponding to $W^i_k$ in \eqref{eq:INAP-2} in Theorem \ref{theorem-key-inner-approximation-theorem} - are secure capacities in the sense of Definition \ref{def:secure-czc}.

For this exchange pattern $e_n - e_z$, in the spirit of the Netting Lemma \ref{lemma-netting_total_demand_supply}, we define the ``total reserve demand" $TRD$ per zone according to the import position of each zone, namely, $TRD_z := 1$ for zone $z$ and $TRD_l=0$ for all other zones $l \neq z$. Similarly, we define the ``total reserve supply" $TRS$ per zone according to the export position of each zone: $TRS_n := 1$ for zone $n$ and $TRS_l=0$ for all other zones $l \neq n$.

We then consider activation scenarios to demonstrate that the capacities $W^{n,z}_k$ defined above make this matching and cross-zonal exchanges ``reserve deliverable", i.e., $e_n-e_z \in Proj_{nr}(XSP[F_{k}^{\max} \rightarrow W^{n,z}_k])$.

There are two activation scenarios $s_z = 0$ or $s_z=1$ of interest: either $TRDAct_z = 0$, in which case $TRSAct_n=0$ (there is no exchange in that scenario), or $TRDAct_z = 1$, in which case $TRSAct_n=1$ (the initial exchange pattern $e_n - e_z$ in the day-ahead materializes in real time). 

In the first scenario without exchange in real time, on each critical network element $k \in CNE$, the necessary and sufficient capacity to ensure feasibility of the transmission is 0.

In the second scenario where zone $n$ exports $1 MW$ to zone $z$, on each critical network element $k \in CNE$, the necessary and sufficient capacity to ensure feasibility of the transmission is $PTDF_{kn}$ $- PTDF_{kz}$.

Therefore, $max(0, PTDF_{kn}$ $- PTDF_{kz})$ are secure capacities ensuring reserve deliverability in all scenarios for the initial day-ahead exchange pattern $e_n - e_z$.

The proof of the proposition thus follows as a direct corollary of Theorem \ref{theorem-key-inner-approximation-theorem}. 
\end{proof}

The model $IBBE$ scales significantly better than the $SP$ formulation.  Specifically, it requires only a small number of auxiliary variables $f$ that represent bilateral exchanges. These are in the order of $|N|^2$, i.e. polynomial in the number of zones. By contrast, the $SP$ formulation requires an exponential number of auxiliary variables to capture the feasibility of all second-stage scenarios.

However, the example in Figure \ref{fig:feasible_matching} shows that this approach can be conservative in certain instances. The relaxation introduced in the next section aims at relieving this issue. Figure \ref{fig:feasible_matching} also shows that such an improved extension can substantially lower balancing capacity procurement costs. We proceed to explain the example of Figure \ref{fig:feasible_matching}, before advancing to our proposal for overcoming this conservatism.

The example presented in Figure \ref{fig:feasible_matching} assumes a single grid constraint, limiting to  $100 MW$ the capacity of the line $B-C$ in the direction $B$ to $C$, imposing the constraint $\frac{1}{3}NetPosition_A + \frac{2}{3}NetPosition_B \leq 100$.

In this example, the optimal matching of reserve bids that satisfy the reserve deliverability requirement consists of matching 800 MW of demand in $A$ with 300 MW in $C$ and 500 MW in $B$. Indeed, this matching satisfies the network constraints. Moreover, for any percentage of activation of the 800 MW of demand in $A$, we can simply scale down the activations of the supply in $B$ and $C$ by the same percentage, while still satisfying network constraints. However, this matching is not feasible for $IBBE$. Indeed, after transforming the flow-based constraint as described above, one obtains the following constraint on bilateral exchanges: $1/3 f_{B,A} + 2/3 f_{B,C} + 1/3 f_{A,C} \leq 100$. These constraints limit the exports from $B$ to $A$ to 300 MW. Thanks to the less conservative approach which is discussed in the following sections, the entire demand of $800$ MW in $A$ can be served by supply from $B$ (500 MW) and $C$ (300 MW). This reduces reserve procurement costs significantly, since the price in $A$ will decrease by avoiding the matching of the expensive supply at 2000 \euro{}/MWh.

\begin{figure}%[H]
    \centering
    \includegraphics[width=0.6\linewidth]{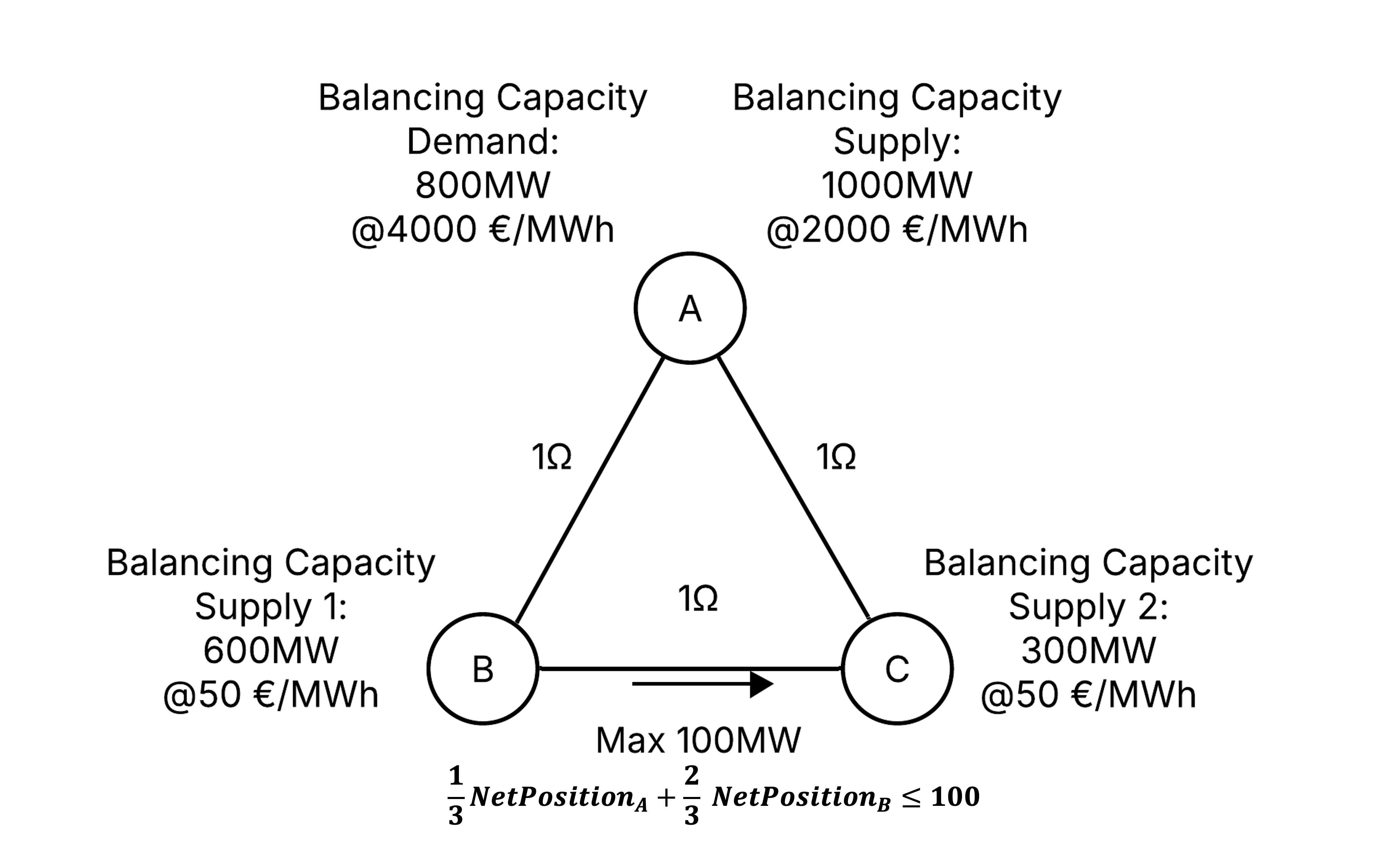}
    \caption{
       An example which illustrates why $IBBE$ can turn out to be a conservative formulation of the reserve deliverability requirement.}
    \label{fig:feasible_matching}
\end{figure}

\section{Inner approximations via boxes in the space of multilateral exchanges}
\label{section:multilateral-exchanges-formulation}

We now elaborate on a relaxation of $IBBE$, which we refer to as $IBME$. The initials stand for inscribed boxes in the space of multilateral exchanges.

The example in Figure \ref{fig:feasible_matching} illustrates a key reason why $IBBE$ may be overly conservative in certain cases. Concretely, in cases where activations in real time are lower than the total procured volume of reserve demand, the model checks for the feasibility of all bilateral exchanges being scaled down \emph{independently} from each other.

However, as illustrated in Figure \ref{fig:feasible_matching}, this can be overly restrictive. In particular, there can be situations where a low activation in some importing location can be managed by uniformly scaling down \emph{simultaneously} multiple bilateral exchanges while still satisfying the network constraints.

This suggests to further leverage the principle outlined in Theorem \ref{theorem-key-inner-approximation-theorem}, by considering in \eqref{eq:INAP-1} the combination of more general exchanges of reserves than the ones used to derive $IBBE$.

Driven by the intuition of the example in Figure \ref{fig:feasible_matching}, we will consider more general exchange patterns than those of \eqref{eq:XIBBE2-V} in $IBBE$, which were of the form $v = (v_{1}^{l}, ..., v_{z}^{l}, ..., v_{|N|}^{l})^T$ with exactly one entry equal to $-1$ and one entry equal to $1$.  Instead, in the more general exchange patterns that we now consider, exactly one zone is importing, while all other zones are only allowed to export. In other words, exactly one entry of $v$ is $-1$ and all other entries in $v$ must be non-negative.

This leads to the model $IBME$ below, and its finite-dimensional reformulation $FIBME$ in Section \ref{section:finite-reformulation}. We will show that this model correctly enforces the reserve deliverability requirement as a direct corollary of Theorem \ref{theorem-key-inner-approximation-theorem}. The advantage of this model is that it is less conservative than $IBBE$. In other words, it is a better inner approximation of $SP$.

The main challenge with situations such as the one that arises in Figure \ref{fig:feasible_matching} is that the repartition of the exports across exporting zones is not known in advance. Concretely, as far as this example is concerned, we do not know in advance how much power will be sourced from $B$ and how much will be sourced from $C$ for any given import in zone $A$.

We therefore need to consider all possibilities. This leads to considering an infinite number of vectors $v$ of cross-zonal exchanges as candidate vectors for the construction in \eqref{eq:INAP-1}-\eqref{eq:INAP-3} of Theorem \ref{theorem-key-inner-approximation-theorem}, even though only a finite number of them will eventually be used. This is where elements of semi-infinite linear programming come into play. We consider in \eqref{eq:XIBME1}-\eqref{eq:XIBME4} an infinite number of decision variables $f_v$. Each such variable is indexed by $v$, which corresponds to each  possible vector of coefficients  of the type described above, i.e. where one entry is equal to -1 and all others are non-negative. Only a finite number of these variables $f_v$ are allowed to take non-zero values. This is called a ``generalized finite sequence'' in the classical work \cite{charnes1963duality} on duality in semi-infinite linear programming. This ensures that the sums appearing in \eqref{eq:XIBME1} are indeed finite, exactly in line with \eqref{eq:INAP-1} in Theorem \ref{theorem-key-inner-approximation-theorem} where a finite number of cross-zonal exchanges are summed up.

\begin{align}
& \max_{pr,dr, nr} \ \ \sum_r VR^r dr_r  \taglabel{IBME1} \\
& dr_r \leq DR_r                & \forall r \in RDB    \hspace{1cm} [sdr_r]                                \taglabel{IBME2}\\
& pr_{g} \leq \widetilde{P_g}          & \forall g \in G       \hspace{1cm} [s^+_g]                              \taglabel{IBME3}\\
&  \sum_{r | n(r) = n} dr_r - \sum_{g | n(g) = n} pr_g = -nr_n  & \forall n \in N  \hspace{1cm} [\pi r_n]       \taglabel{IBME4}\\
& & \notag \\
& dr, pr \geq 0              \taglabel{IBME5}\\
& nr \in Proj_{nr}(XIBME)                         \taglabel{IBME6}
\end{align}

with $XIBME$ defined by the following constraints :

\begin{align}
& \begin{pmatrix}
nr_{1} \\
\vdots \\
nr_{z} \\
\vdots \\
nr_{|N|}
\end{pmatrix}
= 
\sum_{\substack{v_{}^{1}\ | \\ v_{1}^{1} = -1, \\ v_{z}^{1} \geq 0 \ \forall z \neq 1 \\ \sum_z v^1_{z} = 0} }
\begin{pmatrix}
v_{1}^{1} \\
\vdots \\
v_{z}^{1} \\
\vdots \\
v_{|N|}^{1}
\end{pmatrix}
f_{v^1}
+ \ldots 
+ \sum_{\substack{v_{}^{l}\ | \\ v_{l}^{l} = -1, \\ v_{z}^{l} \geq 0 \ \forall z \neq l \\ \sum_z v^l_{z} = 0} }
\begin{pmatrix}
v_{1}^{l} \\
\vdots \\
v_{z}^{l} \\
\vdots \\
v_{|N|}^{l}
\end{pmatrix}
f_{v^l}
+ \ldots +
\sum_{\substack{v_{}^{|N|}\ | \\ v_{|N|}^{|N|} = -1, \\ v_{z}^{|N|} \geq 0 \ \forall z \neq |N| \\ \sum_z v^{|N|}_{z} = 0}}
\begin{pmatrix}
v_{1}^{|N|} \\
\vdots \\
v_{z}^{|N|} \\
\vdots \\
v_{|N|}^{|N|}
\end{pmatrix}
f_{v^{|N|}}  \notag \\ & \hspace{13.4cm}[\pi rnet_n] \taglabel{XIBME1} \\
& \sum_{\substack{v_{}^{1}\ | \\ v_{1}^{1} = -1, \\ v_{z}^{1} \geq 0 \ \forall z \neq 1 \\ \sum_z v^1_{z} = 0} } \max \left(0, \sum_{z}^{}{PTDF_{k,z}}\ v_{z}^{1} \right){\ f}_{v^1} + ... + \sum_{\substack{v_{}^{l}\ | \\ v_{l}^{l} = -1, \\ v_{z}^{l} \geq 0 \ \forall z \neq l \\ \sum_z v^l_{z} = 0} } \max \left(0, \sum_{z}^{}{PTDF_{k,z}}\ v_{z}^{l} \right){\ f}_{v^l}   \notag \\ & + ... + \sum_{\substack{v_{}^{|N|}\ | \\ v_{|N|}^{|N|} = -1, \\ v_{z}^{|N|} \geq 0 \ \forall z \neq |N| \\ \sum_z v^{|N|}_{z} = 0}} \max\left(0, \sum_{z}^{}{PTDF_{k,z}}\ v_{z}^{{|N|}} \right){\ f}_{v^{|N|}} \notag \leq \ \widetilde{F_{k}^{\max}}        ,\quad k \in CNE \quad \quad [SP_k] \taglabel{XIBME2}
\end{align}

\begin{align}
& f_v \geq 0 \taglabel{XIBME3} \\
& f_v \text{ is a generalized finite sequence \cite{charnes1963duality}, i.e. it has finitely many non-zero elements}\taglabel{XIBME4}
\end{align}

\begin{proposition} \label{proposition:ibme_relaxation_of_ibbe}
    The model $IBME$ is a relaxation of $IBBE$
\end{proposition}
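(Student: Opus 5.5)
The claim is that $Proj_{nr}(XIBBE) \subseteq Proj_{nr}(XIBME)$. Since \eqref{eq:IBBE1-IBBE5} and \eqref{eq:IBME1}--\eqref{eq:IBME5} are identical, this inclusion gives the relaxation. The plan is to map each feasible point of $XIBBE$ directly to a feasible point of $XIBME$ with the same $nr$. The observation behind this is that every bilateral pattern $e_n - e_z$ used in \eqref{eq:XIBBE2-V} is itself an admissible multilateral pattern in \eqref{eq:XIBME1}.

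Concretely, for $n \neq z$ I will set $v^{(n,z)} := e_n - e_z$. This vector has entry $-1$ in position $z$, entry $1$ in position $n$ and zeros elsewhere. So it satisfies $v_z = -1$, $v_{l} \ge 0$ for $l \neq z$, and $\sum_l v_l = 0$, which means it belongs to the index family of the $z$-th sum in \eqref{eq:XIBME1}. Given a feasible $(nr, f)$ of $XIBBE$, I define $f_{v^{(n,z)}} := f_{n,z}$ and set $f_v := 0$ for every other $v$. The number of patterns is $|N|(|N|-1)$, which is finite, so \eqref{eq:XIBME4} holds. Nonnegativity \eqref{eq:XIBME3} follows from \eqref{eq:XIBBE4}.

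Next I check the two constraint blocks. For \eqref{eq:XIBME1}, the right-hand side reduces to $\sum_{n\neq z}(e_n - e_z) f_{n,z}$, and its $m$-th component is $\sum_{z\neq m} f_{m,z} - \sum_{n \neq m} f_{n,m} = nr_m$ by \eqref{eq:XIBBE2}. For \eqref{eq:XIBME2}, the coefficient of $f_{v^{(n,z)}}$ is $\max(0, \sum_l PTDF_{k,l} v^{(n,z)}_l) = \max(0, PTDF_{k,n} - PTDF_{k,z})$. This is exactly the coefficient of $f_{n,z}$ in \eqref{eq:XIBBE3}, so the left-hand sides agree term by term and \eqref{eq:XIBME2} follows from \eqref{eq:XIBBE3}. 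Constraint \eqref{eq:XIBBE1} carries over unchanged, and it is also implied by $\sum_l v_l = 0$ for each pattern.

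I do not expect a real obstacle here. The only point that needs care is the bookkeeping of the indexing: each bilateral variable $f_{n,z}$ must be attached to the importing zone $z$'s family with the correct sign convention $v_z=-1$, so that the orientation matches \eqref{eq:XIBBE2}. Optionally, I could add a remark that the inclusion can be strict, citing the Figure~\ref{fig:feasible_matching} example, where the pattern $(-1, 5/8, 3/8)$ indexed on $A$ is admissible in $XIBME$ but cannot be obtained in $XIBBE$. This is not needed for the proposition itself.
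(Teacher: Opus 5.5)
Your proposal is correct and takes the same approach as the paper. The paper's one-line proof embeds each bilateral variable $f_{n,z}$ as the multilateral column indexed by $e_n - e_z$, which lies in the family of the importing zone $z$, and sets every other $f_v$ to zero. Your checks of the balance constraint, the capacity coefficients $\max(0, PTDF_{k,n}-PTDF_{k,z})$, nonnegativity and finiteness simply spell out the verification the paper leaves implicit.
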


\begin{proof}
$IBME$ is a relaxation of $IBBE$ because any feasible point of $IBBE$ can be extended to a feasible point of $IBME$ by setting $f_{v^l}=0$ when $v^l \notin \{v^l | v^l_{l} =-1,  v^l_{z} = 1 \text{\ for a single\ } z \neq l \}$.    
\end{proof}

\begin{proposition}
    The model $IBME$ provides an inner approximation of $SP$ in the precise sense that $Proj_{nr}(XIBME) \subseteq Proj_{nr}(XSP)$. This means that feasible cross-zonal exchanges of balancing capacity in $IBME$ are all contained in the set of secure exchanges modeled in $SP$.
\end{proposition}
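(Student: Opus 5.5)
Following the proof of Proposition \ref{proposition:ibbe-valid-inner-approx}, we will show that $IBME$ is a special case of Theorem \ref{theorem-key-inner-approximation-theorem}. Take a point $nr \in Proj_{nr}(XIBME)$ with a witness $f$. By \eqref{eq:XIBME4}, only finitely many $f_v$ are non-zero. Enumerate these as $i=1,\dots,T$, with vectors $v^{(i)}$. Each $v^{(i)}$ satisfies $\sum_z v^{(i)}_z = 0$, so it satisfies the global balance constraint \eqref{eq:XSP1}. Then \eqref{eq:XIBME1} is exactly \eqref{eq:INAP-1}, \eqref{eq:XIBME3} is \eqref{eq:INAP-3}, and \eqref{eq:XIBME2} is \eqref{eq:INAP-2} with
\[
W^{(i)}_k := \max\Bigl(0, \sum_z PTDF_{k,z} v^{(i)}_z\Bigr).
\]
It therefore suffices to show that, for each fixed pattern $v$ with $v_l=-1$ and $v_z\ge 0$ for $z\neq l$, these $W_k$ are Secure Cross-zonal Capacities in the sense of Definition \ref{def:secure-czc}. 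In other words, we need $v \in Proj_{nr}(XSP[F_{k}^{\max} \rightarrow W_k])$. Theorem \ref{theorem-key-inner-approximation-theorem} then gives $nr \in Proj_{nr}(XSP)$.

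\textbf{Constructing the first stage.} In the spirit of the Netting Lemma \ref{lemma-netting_total_demand_supply}, set $TRD_l := 1$ and $TRD_z := 0$ for $z\neq l$. Set $TRS_z := v_z$ for $z \neq l$ and $TRS_l := 0$. Then $nr = TRS - TRD = v$, and \eqref{eq:XSP1} holds. Constraint \eqref{eq:XSP2} with capacities $W_k$ reads $\sum_z PTDF_{k,z} v_z \le \max(0,\sum_z PTDF_{k,z} v_z)$, which holds trivially.

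\textbf{Second stage.} By Proposition \ref{prop:xsp}, it is enough to check the binary scenarios $s\in\{0,1\}^{|N|}$. Only zone $l$ has non-zero demand, so only $s_l$ matters.
\begin{itemize}
\item If $s_l=0$, take $TRSAct_{\cdot,s}=0$. Then $nrAct_{\cdot,s}=0$, the balance \eqref{eq:XSP7} holds, and \eqref{eq:XSP8} requires $0\le W_k$, which holds.
\item If $s_l=1$, take $TRSAct_{z,s}=v_z \le TRS_z$. Then $nrAct_{\cdot,s}=v$, the balance holds since $\sum_z v_z=0$, and \eqref{eq:XSP8} requires $\sum_z PTDF_{k,z}v_z\le W_k$, which holds by definition of the maximum.
\end{itemize}
All variables are non-negative because $v_z\ge 0$ for $z\ne l$, so \eqref{eq:XSP9} holds.

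\textbf{Main obstacle.} The only delicate point is the semi-infinite nature of $XIBME$, since Theorem \ref{theorem-key-inner-approximation-theorem} is stated for finitely many patterns. This is resolved by the generalized finite sequence condition \eqref{eq:XIBME4}, which lets us restrict to the support of $f$ before invoking the theorem.
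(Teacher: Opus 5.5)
Your proposal is correct and takes essentially the same route as the paper. You restrict to the finite support of $f$ via the generalized finite sequence condition, invoke Theorem~\ref{theorem-key-inner-approximation-theorem}, and certify $W_k=\max(0,\sum_z PTDF_{k,z}v_z)$ as secure capacities using the netted first stage $TRD_l=1$, $TRS_z=v_z$ and the two activation cases $s_l\in\{0,1\}$. Your appeal to Proposition~\ref{prop:xsp} is harmless but unnecessary, since $XSP$ is already defined over binary scenarios. Your explicit check of \eqref{eq:XSP2} with capacities $W_k$ fills in a detail the paper leaves implicit.
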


\begin{proof}
    As discussed above, this is a direct corollary of Theorem \ref{theorem-key-inner-approximation-theorem}. Given the fact that the sums in \eqref{eq:XIBME1} are finite thanks to the generalized finite sequence condition \eqref{eq:XIBME4}, the constraints \eqref{eq:XIBME1}-\eqref{eq:XIBME4} are a special case of \eqref{eq:INAP-1}-\eqref{eq:INAP-3}, provided that the coefficients $W^l_k :=\max\left(0, \sum_{z}^{}{PTDF_{k,z}}\ v_{z}^{l} \right)$ appearing in \eqref{eq:XIBME2} are secure capacities $W^l_k$ in the sense of Definition \ref{def:secure-czc}.

    The proof that these coefficients $W^l_k$ are secure capacities is then very similar to what is developed in the proof of Proposition \ref{proposition:ibbe-valid-inner-approx}.

    Consider an exchange pattern $v^l = (v^l_1, ..., v^l_l, ..., v^l_{|N|})^T$, where $v^l_l = -1$ and $v^l_z \geq 0$ for $z \neq l$. To show that the $W^l_k$ are secure cross-zonal capacities, we need to build a point of $XSP[F_{k}^{\max} \rightarrow W^l_k]$. To show this, we first define first-stage variable values $TRD, TRS$ in line with the Netting Lemma \ref{lemma-netting_total_demand_supply} as follows: $TRD_l = - v^l_l = 1$ for the importing zone $l$, $TRD_z = 0$ for all other zones $z\neq l$, while $TRS_l = 0$ for $l$ and $TRS_z = v^l_z \geq 0$ for all other zones $z\neq l$.

    As in Proposition \ref{proposition:ibbe-valid-inner-approx}, there are again only two activation scenarios to consider, corresponding to whether $TRD_l$ is fully activated in real time, or if it is not activated at all, given that  $TRD_z = 0$ for all other zones $z \neq l$.

    If there is no activation in real time, $TRDAct_l = 0$, and given the balance conditions \eqref{eq:XSP7}, the activations on the supply side must be all zero as well: $TRSAct_z = 0, z \neq l$. Therefore, in that scenario where there is eventually no exchange in real time, on each critical network element $k \in CNE$, the necessary and sufficient capacity for ensuring feasibility of transmission is 0.

    In the second scenario where all the first-stage reserve demand $TRD_l = -v^l_l = 1$ is activated in real time, one has $TRDAct_l = - v^l_l = 1$, in which case all the corresponding supply $TRS_z = v^l_z, z \neq l$ from the day-ahead needs to be activated, i.e. $TRSAct_z = v^l_z, z \neq l$. In that scenario, on each critical network element $k \in CNE$, the necessary and sufficient capacity for ensuring feasibility of transmission is $\sum_{z}^{}{PTDF_{k,z}}\ v_{z}^{l}$.

    Therefore, $max(0, \sum_{z}^{}{PTDF_{k,z}}\ v_{z}^{l})$ are secure capacities ensuring reserve deliverability in all scenarios for the day-ahead exchange pattern $v^l = (v^l_1, ..., v^l_l, ..., v^l_{|N|})^T$, concluding the proof.

\end{proof}

The challenge with $IBME$ is that it is a semi-infinite dimensional linear program, defined over an infinite number of variables $f_{v^l}$. We address this challenge in two alternative ways. First, we provide in Section \ref{section:finite-reformulation} a finite-dimensional reformulation of $IBME$. For this purpose, we use the (Haar) dual of the semi-infinite linear programming problem $IBME$ and a standard reformulation technique for robust linear optimization problems. Second, in Section \ref{section:cg-algo} we propose a column generation algorithm to tackle $IBME$ and prove its convergence in a finite number of iterations. 

\section{Finite reformulation of %the (infinite-dimensional) formulation for 
boxes in the space of multilateral exchanges} \label{section:finite-reformulation}

In this section, our objective is to rewrite the set of feasible first-stage decisions $Proj_{nr}(XIBME)$, using a \emph{finite} number of variables and linear inequalities, which means that we wish to transform the semi-infinite linear programming formulation \eqref{eq:XIBME1}-\eqref{eq:XIBME4} into a classic linear program. We refer to this as $FIBME$.

We obtain the reformulation $FIBME$ by considering the dual $DIBME$ of the semi-infinite linear programming formulation $IBME$, and then reformulating $DIBME$ into a finite linear program $D$ whose dual is the finite linear program $FIBME$. However, there could be in theory a duality gap between the semi-infinite linear program $IBME$ and its (semi-infinite linear) dual $DIBME$, while there will be by construction no gap between $DIBME$, $D$ and $FIBME$. Instead of relying on standard sufficient conditions ensuring the absence of a duality gap for a semi-infinite linear program and its (Haar) dual, we provide a direct elementary proof that there is no gap between $IBME$ and $FIBME$, therefore showing as a corollary that there is no duality gap between $IBME$ and $DIBME$.

Let us first derive $FIBME$ as outlined above. 

Let us fix an importing zone $l$, appearing as a superscript $l$ in the developments below. In the dual $DIBME$ of $IBME$, the dual constraints associated to the variables $f_{v^l}$ are:

\begin{align}
& \sum_z -{\pi rnet}_z v^l_{z} + \sum_k SP_k \cdot \max(\sum_{z}^{} PTDF_{k,z}v_{z}^{l},0) \geq 0 \label{robust_linear_constr} \\ 
&\forall v^l_{}\ s.t. \notag\\
& v^l_{l}=-1, & [y^l_l] \label{robust_linear_constr-1}\\
& v^l_{z} \geq 0 \hspace{1cm}  l\neq z & [y^l_z] \\
& \sum_z v^l_{z} = 0 & [y^l_0] \label{robust_linear_constr-end}
\end{align}

Note that, in the language of robust optimization, \eqref{robust_linear_constr-1}-\eqref{robust_linear_constr-end} describe the uncertainty set of the parameters $v^l$ of the robust linear constraint \eqref{robust_linear_constr} in the variables $\pi rnet_z$ (dual to constraints \eqref{eq:XIBME1}), $SP_k$ (standing for Shadow Price, dual to constraints \eqref{eq:XIBME2}).

Leveraging a standard technique to formulate a robust linear optimization constraint as a finite set of linear inequalities \cite{sun2021robust}, we show the following.

\medskip

\begin{lemma}\label{lemma:reformulation-dual}
   The robust linear constraint \eqref{robust_linear_constr}, which is dual to $f_{v^l}$ with $v^l$ ranging over all admissible vectors described by \eqref{robust_linear_constr-1}-\eqref{robust_linear_constr-end} can be reformulated as a finite set of linear inequalities in the variables $\pi rnet$ (dual to constraints \eqref{eq:XIBME1}), $SP$ (dual to constraints \eqref{eq:XIBME2}), $h$ (auxiliary variables), $y_0$ (auxiliary variables), as follows:
   \begin{align}
       & - h^l_k + SP_k \geq 0  \label{robust-reformulation-1}\\
       & -y^l_0 + \sum_k PTDF_{k,l} h^l_k \leq {\pi rnet}_l \label{robust-reformulation-2} \\
       & -y^l_0 + \sum_k PTDF_{k,z} h^l_k \geq {\pi rnet}_z \hspace{1cm} z \neq l  \label{robust-reformulation-3} \\
       & h^l_k \geq 0 \label{robust-reformulation-4}
   \end{align}
   
\end{lemma}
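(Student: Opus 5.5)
The plan is to turn the semi-infinite constraint \eqref{robust_linear_constr} into a statement of the form ``for some $h$, a linear function of $v^l$ is nonnegative over the uncertainty set'', and then dualize the inner linear program in $v^l$. The variables $y^l_0$ will be the multipliers of that inner LP; the bracketed duals in \eqref{robust_linear_constr-1}-\eqref{robust_linear_constr-end} suggest exactly this.

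\emph{Step 1 (linearising the max).} Since $SP_k$ is dual to the $\leq$ constraints \eqref{eq:XIBME2}, we have $SP_k \geq 0$. For any real $a$,
\[
SP_k \max(a,0)=\max_{0\le h_k\le SP_k} h_k a .
\]
Hence \eqref{robust_linear_constr} is equivalent to requiring, for every admissible $v^l$,
\[
\max_{h\in H}\ \phi(v^l,h)\ \geq 0, \qquad \text{where } \phi(v^l,h):=\sum_z \Big(-\pi rnet_z+\sum_k PTDF_{k,z}h_k\Big)v^l_z
\]
and $H=\{h:0\le h_k\le SP_k\}$. This is where \eqref{robust-reformulation-1} and \eqref{robust-reformulation-4} come from.

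\emph{Step 2 (exchanging quantifiers, the main obstacle).} We need to pass from ``for all $v^l$ there exists $h$'' to ``there exists $h$ such that for all $v^l$'', i.e. to show
\[
\inf_{v^l\in U^l}\max_{h\in H}\phi=\max_{h\in H}\inf_{v^l\in U^l}\phi ,
\]
where $U^l$ is the set described by \eqref{robust_linear_constr-1}-\eqref{robust_linear_constr-end}. The function $\phi$ is bilinear. $H$ is a compact convex box, and $U^l$ is convex (a nonempty polyhedron, though unbounded). These are exactly the hypotheses of Sion's minimax theorem with the compact set on the maximising side, so the equality holds, possibly with value $-\infty$ on both sides. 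Moreover, the outer maximum over $h$ is attained, because $h\mapsto\inf_{v^l}\phi$ is an infimum of linear functions and hence upper semicontinuous on the compact set $H$. Therefore \eqref{robust_linear_constr} holds if and only if there is some $h^l\in H$ with $\inf_{v^l\in U^l}\phi(v^l,h^l)\ge 0$.

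If one prefers to avoid Sion, I would try a direct argument instead. Suppose that for every $h\in H$ some $v^l\in U^l$ gives $\phi<0$. Then a separating-hyperplane or Farkas argument on the finite-dimensional LP system should produce a single $v^l$ that violates \eqref{robust_linear_constr}. This route is more laborious, so Sion's theorem is the first choice.

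\emph{Step 3 (LP duality in $v^l$).} Fix $h^l$ and set $c_z:=-\pi rnet_z+\sum_k PTDF_{k,z}h^l_k$. The inner problem is
\[
\min\ c^\top v \quad \text{s.t.}\quad v_l=-1,\ \ v_z\ge 0 \ (z\neq l),\ \ \sum_z v_z=0 .
\]
It is feasible, for instance with $v=e_{z}-e_l$ for any $z\neq l$. Dualize only the constraint $\sum_z v_z=0$ with a multiplier, called $y^l_0$ up to a sign. The resulting Lagrangian minimisation over $\{v_l=-1,\ v_z\ge0\}$ is bounded below exactly when $c_z-y^l_0\ge 0$ for all $z\ne l$, which is \eqref{robust-reformulation-3}. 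In that case its value is $-(c_l-y^l_0)$, and requiring this value to be nonnegative gives \eqref{robust-reformulation-2}.

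By LP strong duality (the primal is feasible), the optimal value is $\geq 0$ if and only if such a $y^l_0$ exists. This also covers the unbounded case consistently: if the primal value is $-\infty$, the dual is infeasible, so no $y^l_0$ exists. Combining Steps 1--3 yields exactly the system \eqref{robust-reformulation-1}-\eqref{robust-reformulation-4} in the variables $(\pi rnet, SP, h^l, y^l_0)$.
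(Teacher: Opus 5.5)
Your argument is correct, but it reaches the reformulation by a different route than the paper. The paper never exchanges quantifiers explicitly. It replaces each $\max$ term by an epigraph variable $t_k\ge \sum_z PTDF_{k,z}v^l_z$, $t_k\ge 0$, placed \emph{inside} the minimization over $v^l$; this is legitimate because $SP_k\ge 0$, and it yields exactly the pricing LP \eqref{pricing-1}--\eqref{pricing-end}. The paper then applies one finite LP duality to that joint LP in $(v^l,t)$. In that dual, $h^l_k$ is the multiplier of the epigraph constraint, $h^l_k\le SP_k$ is the dual constraint of $t_k$, and the multiplier $y^l_l$ of $v^l_l=-1$ is projected out as a slack. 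You instead put $h$ in a maximizing position via $SP_k\max(a,0)=\max_{0\le h_k\le SP_k}h_k a$, exchange $\inf_v$ and $\max_h$ by Sion (correctly applied to $-\phi$ with $H$ compact), and dualize only in $v^l$.

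Your route makes two things transparent: the meaning of $h^l_k$ as a selection in $[0,SP_k]$ of the subgradient of the max term, and where the existential quantifier on $h^l$ comes from. The paper's route needs nothing beyond LP duality, and it produces as a by-product the very pricing problem used in Section~\ref{section:cg-algo}. In this bilinear polyhedral setting your minimax step is itself just LP duality, so Sion is heavier machinery than needed.

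One slip: $U^l$ is not unbounded. The constraints $v^l_l=-1$, $\sum_z v^l_z=0$ and $v^l_z\ge 0$ force $\sum_{z\neq l}v^l_z=1$, so $U^l$ is a simplex with vertices $e_z-e_l$, $z\neq l$; the paper itself alludes to the compact polyhedral nature of this set. This does not hurt your proof, since you placed compactness on $H$. It does make your $-\infty$ discussion vacuous, and it lets you use von Neumann's minimax theorem in place of Sion. It also reduces Step 3 to the observation $\min_{v\in U^l}c^\top v=\min_{z\neq l}c_z-c_l$. From that, the existence of $y^l_0$ with $c_l\le y^l_0\le c_z$ for all $z\neq l$ follows immediately, without any duality argument.
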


\begin{proof}
    Given that the dual variables $SP_k$ corresponding to constraints \eqref{eq:XIBME2} are all non-negative, constraint \eqref{robust_linear_constr} is satisfied if and only if $z\geq 0 $ for
\begin{align}
& z = \min_{v^l_{z}, t_k} \sum_z - \pi rnet_z v^l_{z} + \sum_k SP_k t_k \label{pricing-1} \\ 
& t_k \geq \sum_{z}^{} PTDF_{k,z}v_{z}^{l} & [h^l_k] \\ 
& v^l_{l}=-1,  & [y^l_l] \\
& \sum_z v^l_{z} = 0 & [y^l_0] \\
& v^l_{z} \geq 0 \ \hspace{2cm} \forall z\neq l \\
& t_k \geq 0 \label{pricing-end}
\end{align}

By linear programming duality, $z$ is also equal to: 
\begin{align}
& z = \max_{h,y} - y^l_l \\ 
&  h^l_k  \leq SP_k  \\
& y^l_0 + y^l_l - \sum_k PTDF_{k,l} h^l_k = {-\pi rnet}_l  \\
& y^l_0 - \sum_k PTDF_{k,z} h^l_k \leq {-\pi rnet}_z \hspace{1cm} z \neq l \\
& h^l_k \geq 0
\end{align}

and $z\geq 0$ if and only if the following system is feasible:
\begin{align}
& - y^l_l \geq 0 \label{ref-slack-var-0}\\ 
& h^l_k - SP_k \leq 0 \\
& y^l_0 + y^l_l - \sum_k PTDF_{k,l} h^l_k = {-\pi rnet}_l  \label{ref-slack-var} \\
& y^l_0 - \sum_k PTDF_{k,z} h^l_k \leq {-\pi rnet}_z \hspace{1cm} z \neq l \\
& h^l_k \geq 0
\end{align}

We can then project out $y^l_l$. Given \eqref{ref-slack-var-0}, this acts as a slack variable in \eqref{ref-slack-var}. We specifically remove \eqref{ref-slack-var-0} and replace \eqref{ref-slack-var} by \eqref{ref-slack-var-new} below:
\begin{align}
& h^l_k - SP_k \leq 0 \\
& y^l_0 - \sum_k PTDF_{k,l} h^l_k \geq {-\pi rnet}_l  \label{ref-slack-var-new}  \\
& y^l_0 - \sum_k PTDF_{k,z} h^l_k \leq {-\pi rnet}_z \hspace{1cm} z \neq l \\
& h^l_k \geq 0
\end{align}

The reformulation \eqref{robust-reformulation-1}-\eqref{robust-reformulation-4} then follows by multiplying the first three inequalities by (-1). \end{proof}

Thanks to Lemma \ref{lemma:reformulation-dual}, the dual of $IBME$ can be reformulated as the following finite-dimensional LP. Here, the infinite set of dual constraints \eqref{robust_linear_constr} associated to the primal variables $f_v$ has been replaced by \eqref{eq:D5}-\eqref{eq:D8}.
\begin{align}
& \min \sum_r DR_r sdr_r + \sum_g \widetilde{P_g} s_{g}^{+} + \sum_k \widetilde{F_k^{max}}  SP_k \taglabel{D1}\\
& s_{g}^{+} - \pi r_{z(g)} \geq 0\ \hspace{4cm} g \in G & [pr_g] \taglabel{D2} \\
& sdr_{r}\  + \ {\pi r}_{z(r)} \geq VR^{r}, \hspace{3cm} r \in RDB &[dr_r] \taglabel{D3} \\
& {\pi r}_{z} + \ \pi{rnet}_{z}= 0 & [nr_z] \taglabel{D4} \\
& - h^l_k + SP_k \geq 0 & [w^l_k \geq 0] \taglabel{D5}  \\
& -y^l_0 + \sum_k PTDF_{k,l} h^l_k \leq {\pi rnet}_l \hspace{2cm} \forall l \in N  & [nr^l_l \leq 0]  \taglabel{D6} \\
& -y^l_0 + \sum_k PTDF_{k,z} h^l_k \geq {\pi rnet}_z \hspace{1cm} z \neq l, l\in N  & [nr^l_z \geq 0]  \taglabel{D7}  \\
& h \geq 0 \taglabel{D8} \\ 
&  s_{g}^{+}, sdr, SP \geq 0 \taglabel{D9}
\end{align}

The dual of the reformulated dual \eqref{eq:D1}-\eqref{eq:D9} yields a finite-dimensional reformulation $FIBME$ of $IBME$ given below.  In Proposition \ref{proposition:ibme-fibme}, we provide a direct proof that the two models are equivalent.

Technically, this enables us to show that there is no duality gap between $IBME$ and its semi-infinite linear programming ``Haar dual" $DIBME$, where the infinite set of dual constraints attached to $f_v$ are considered, before reformulation into the finite dual $D$ and then $FIBME$ (these latter reformulations by design preventing  any additional gap).

Showing the absence of a duality gap between $IBME$ and $DIBME$ would otherwise require relying on specific sufficient conditions from the semi-infinite linear programming literature beyond the scope of this paper, with the core reason being that any constraint $c$ implied by the infinite family of constraints \eqref{robust_linear_constr} for all $v$ satisfying  \eqref{robust_linear_constr-1}-\eqref{robust_linear_constr-end} is actually implied by a finite number of them, which is called the Farkas-Minkowski property, see \cite{goberna1999duality} (here due to the compact and polyhedral nature of  \eqref{robust_linear_constr-1}-\eqref{robust_linear_constr-end} leveraged to obtain the reformulation $FIBME$).

\begin{align}
& \max_{pr,dr, nr_z} \ \  \sum_r VR^r dr_r  \taglabel{FIBME1} \\
& dr_r \leq DR_r                & \forall r \in RDB    \hspace{1cm} [sdr_r]                                \taglabel{FIBME2}\\
& pr_{g} \leq \widetilde{P_g}          & \forall g \in G       \hspace{1cm} [s^+_g]                              \taglabel{FIBME3}\\
&  \sum_{r | z(r) = z} dr_r - \sum_{g | z(g) = z} pr_g = -nr_z  & \forall z \in N  \hspace{1cm} [\pi r_z]       \taglabel{FIBME4}\\
& & \notag \\
& dr, pr \geq 0              \taglabel{FIBME5}\\
& nr \in Proj_{nr}(XFIBME)                         \taglabel{FIBME6}
\end{align}

with $XFIBME$ given by:

\begin{align}
& nr_z = \sum_{l \in N} nr^l_z \hspace{2cm} \forall z \in N & [{\pi rnet}_z] \taglabel{XFIBME1} \\
& -\sum_{z \in N} nr^l_z = 0 & \forall l \in N \taglabel{XFIBME2} \\
& \sum_{z \in N} PTDF_{k,z} nr^l_z \leq w^l_k \hspace{4cm} \forall l \in N, k \in CNE \taglabel{XFIBME3} \\
& \sum_{l \in N} w^l_k \leq \widetilde{F_{k}^{\max}} \hspace{4cm} \forall k \in CNE  \taglabel{XFIBME4}\\
& nr^l_l \leq 0, \ \ \ \  nr^l_z \geq 0 \ \ \ z\neq l, \hspace{2cm } \forall l \in N \taglabel{XFIBME5} \\
& w \geq 0 \taglabel{XFIBME6}
\end{align}

\begin{proposition} The models $IBME$ and $FIBME$ are equivalent: \\
    $Proj_{nr}(XIBME) = Proj_{nr}(XFIBME)$
\end{proposition}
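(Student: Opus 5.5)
We prove the two inclusions directly, without going through duality.

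\emph{Inclusion $Proj_{nr}(XIBME) \subseteq Proj_{nr}(XFIBME)$.} Take $nr$ and a generalized finite sequence $f_v \geq 0$ satisfying \eqref{eq:XIBME1}-\eqref{eq:XIBME2}. For each importing zone $l$, aggregate the finitely many active patterns with that importer by setting
\[
nr^l := \sum_{v^l} v^l f_{v^l}, \qquad w^l_k := \sum_{v^l} \max\Big(0, \sum_z PTDF_{k,z} v^l_z\Big) f_{v^l}.
\]
Then the following checks are immediate:
\begin{itemize}
\item \eqref{eq:XFIBME1} is \eqref{eq:XIBME1} grouped by importer $l$.
\item \eqref{eq:XFIBME2} holds because each $v^l$ sums to zero.
\item \eqref{eq:XFIBME5} holds because $v^l_l=-1$, $v^l_z\ge 0$ for $z \neq l$, and $f \ge 0$.
\item \eqref{eq:XFIBME3} holds because $\sum_z PTDF_{k,z} nr^l_z = \sum_{v^l} \big(\sum_z PTDF_{k,z} v^l_z\big) f_{v^l} \le w^l_k$ termwise, since $a \le \max(0,a)$.
\item \eqref{eq:XFIBME6} holds since each $w^l_k$ is a nonnegative combination of nonnegative terms.
\item \eqref{eq:XFIBME4} is exactly \eqref{eq:XIBME2}.
\end{itemize}

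\emph{Inclusion $Proj_{nr}(XFIBME) \subseteq Proj_{nr}(XIBME)$.} Take a feasible $(nr, nr^l, w^l)$ of $XFIBME$. For each $l$, either $nr^l_l = 0$, or $nr^l_l < 0$.

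If $nr^l_l=0$, then \eqref{eq:XFIBME2} together with $nr^l_z \ge 0$ for $z \neq l$ forces $nr^l = 0$. This block is dropped: it contributes nothing to $nr$, and its $w^l_k \ge 0$ can only loosen \eqref{eq:XFIBME4} when removed.

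If $nr^l_l<0$, define the single pattern
\[
v^l := nr^l / (-nr^l_l), \qquad f_{v^l} := -nr^l_l > 0.
\]
Then $v^l_l=-1$, $v^l_z \ge 0$ for $z\neq l$, and $\sum_z v^l_z = 0$, so $v^l$ is admissible in \eqref{eq:XIBME1}, and $v^l f_{v^l} = nr^l$. Summing over $l$ recovers $nr$ via \eqref{eq:XFIBME1}. The resulting sequence has at most $|N|$ nonzero entries, so \eqref{eq:XIBME3}-\eqref{eq:XIBME4} hold.

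For \eqref{eq:XIBME2}, compute
\[
\max\Big(0,\sum_z PTDF_{k,z} v^l_z\Big) f_{v^l} = \max\Big(0, \sum_z PTDF_{k,z} nr^l_z\Big) \le w^l_k ,
\]
where the inequality uses \eqref{eq:XFIBME3} and $w^l_k \ge 0$. Summing over $l$ and applying \eqref{eq:XFIBME4} yields \eqref{eq:XIBME2}.

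\emph{Main obstacle.} The one step needing care is the degenerate case $nr^l_l = 0$. It cannot be normalized into a pattern with $v^l_l=-1$, and it is handled by the balance argument above, which shows the whole block vanishes. The remaining point is the positive homogeneity of $\max(0,\cdot)$, which makes the rescaling exact, and the inequality $a \le \max(0,a)$ used in the first inclusion. As a by-product, the proof shows $IBME$ always admits an optimal solution using at most one pattern per importing zone, and that there is no duality gap with $DIBME$.
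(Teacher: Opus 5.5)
Your proof is correct and follows essentially the same route as the paper. For the first inclusion you aggregate the patterns by importing zone $l$, and for the second you normalize each nonzero block $nr^l$ by $|nr^l_l|$ into a single pattern. The only cosmetic difference is your choice of $w^l_k$: you take the aggregated per-pattern terms $\sum_{v^l}\max(0,\sum_z PTDF_{k,z}v^l_z)f_{v^l}$ instead of the paper's $\max(0,\sum_z PTDF_{k,z}nr^l_z)$, so \eqref{eq:XFIBME4} becomes a restatement of \eqref{eq:XIBME2} and the subadditivity step becomes a termwise check of \eqref{eq:XFIBME3}.
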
 \label{proposition:ibme-fibme}

\begin{proof}
Let us first show that $Proj_{nr}(XIBME) \subseteq Proj_{nr}(XFIBME)$. Given a point of $XIBME$, we need to construct a point of $XFIBME$ having the same $nr$ values. Consider the right-hand side of \eqref{eq:XIBME1} composed of a sum of series (having each only a finite number of non-zero terms), and for each "destination zone $l$", $l \in \{1, ..., |N|\}$, let us set 

\begin{equation}
nr^l := \sum_{\substack{v_{}^{l}\ | \\ v_{l}^{l} = -1, \\ v_{j}^{l} \geq 0 \ \forall j \neq l \\ \sum_j v^l_{j} = 0} }
\begin{pmatrix}
v_{1}^{l} \\
\vdots \\
v_{z}^{l} \\
\vdots \\
v_{|N|}^{l}
\end{pmatrix}
f_{v^l}  \quad \quad  w^l_k :=  max(0, \sum_z PTDF_{k,z} nr^l_z)
\end{equation}

These values of $nr^l, w^l_k$ exactly give us the needed point in $XFIBME$, since a routine check shows that \eqref{eq:XFIBME1}-\eqref{eq:XFIBME3}, \eqref{eq:XFIBME5}-\eqref{eq:XFIBME6} are satisfied. Moreover, \eqref{eq:XFIBME4} in $XFIBME$ is implied by \eqref{eq:XIBME2} in $XIBME$ since summing up over $l$ the following inequalities shows that for each $k$, $\sum_l w^l_k \leq \widetilde{F_{k}^{\max}}$ because \eqref{eq:XIBME2} requires the right-hand sides summed up over $l$ to be less or equal to $\widetilde{F_{k}^{\max}}$:
\begin{multline}
 w^l_k =  max(0, \sum_z PTDF_{k,z} nr^l_z) = max(0, \sum_{z} \sum_{\substack{v_{}^{l}\ | \\ v_{l}^{l} = -1, \\ v_{j}^{l} \geq 0 \ \forall j \neq l \\ \sum_j v^l_{j} = 0} } PTDF_{k,z}\ v_{z}^{l} f_{v^l} )  \leq \sum_{\substack{v_{}^{l}\ | \\ v_{l}^{l} = -1, \\ v_{j}^{l} \geq 0 \ \forall j \neq l \\ \sum_j v^l_{j} = 0}} max \left(0, \sum_{z}^{}{PTDF_{k,z}}\ v_{z}^{l} \right) f_{v^l}
\end{multline}

Let us now show that $Proj_{nr}(XFIBME) \subseteq  Proj_{nr}(XIBME)$. We construct a point of $Proj_{nr}(XIBME)$ from a point in $Proj_{nr}(XFIBME)$ having the same $nr$ values as follows. Consider destinations $l$ such that $nr^l_l \neq 0$ (otherwise, all entries $nr^l_j = 0$ given \eqref{eq:XFIBME2} and \eqref{eq:XFIBME5} and such null vectors don't need to be considered). We set the following  values for $v^l, f_{v^l}$, where the normalization by $|nr^l_l|$ ensures that $v^l_l = -1$ as in $XIBME$.

\begin{equation}
\begin{pmatrix}
v_{1}^{l} \\
\vdots \\
v_{l}^{l} \\
\vdots \\
v_{|N|}^{l}
\end{pmatrix}
 := \begin{pmatrix}
nr_{1}^{l} \ / \  |nr^l_l| \\
\vdots \\
(nr_{l}^{l} \  /\  |nr^l_l|) = -1 \\
\vdots \\
nr_{|N|}^{l} \  / \  |nr^l_l|
\end{pmatrix}, \quad \quad f_{v^l} := |nr^l_l| \quad \quad l\in \{ 1, ..., |N| \}
\end{equation}

The constraints \eqref{eq:XFIBME3}-\eqref{eq:XFIBME4}, \eqref{eq:XFIBME6} ensure that the constraints \eqref{eq:XIBME2} are satisfied, besides the other constraints  \eqref{eq:XIBME3} which are trivially satisfied since $|nr^l_l| \geq 0$. \end{proof}

The elementary example below demonstrates that the models $IBME, FIBME$ are still conservative. The reason is that not all secure exchanges can be decomposed in the form of a sum of exchanges where only one bidding zone can import while all others can only export, as required by $XFIBME$. The flow-based domain considered in the example of Figure \ref{fig:ibme-fibme-conservative} is described by:
\begin{align}
    & 0.6nr_A + 0.3 nr_B \leq 50 \\
    & -0.6nr_A - 0.3 nr_B \leq 50 \\
    & -\frac{1}{3}nr_A + \frac{1}{3}nr_B - \frac{1}{3}nr_C + \frac{1}{3} nr_D \leq 20 \\
    & \frac{1}{3}nr_A - \frac{1}{3}nr_B + \frac{1}{3}nr_C - \frac{1}{3} nr_D \leq 20 \\
    & nr_A, nr_B, nr_C, nr_D \in [-400; 400]
\end{align}

In Figure \ref{fig:ibme-fibme-conservative}, we show that the feasible sets $Proj_{nr}(XIBME) = Proj_{nr}(XFIBME)$ are strictly contained in the exact stochastic programming formulation for reserve deliverability $Proj_{nr}(XSP)$, and that they strictly contain the feasible set $Proj_{nr}(XIBBE)$.

\begin{figure}%[h!]
    \centering
    \includegraphics[width=1\linewidth]{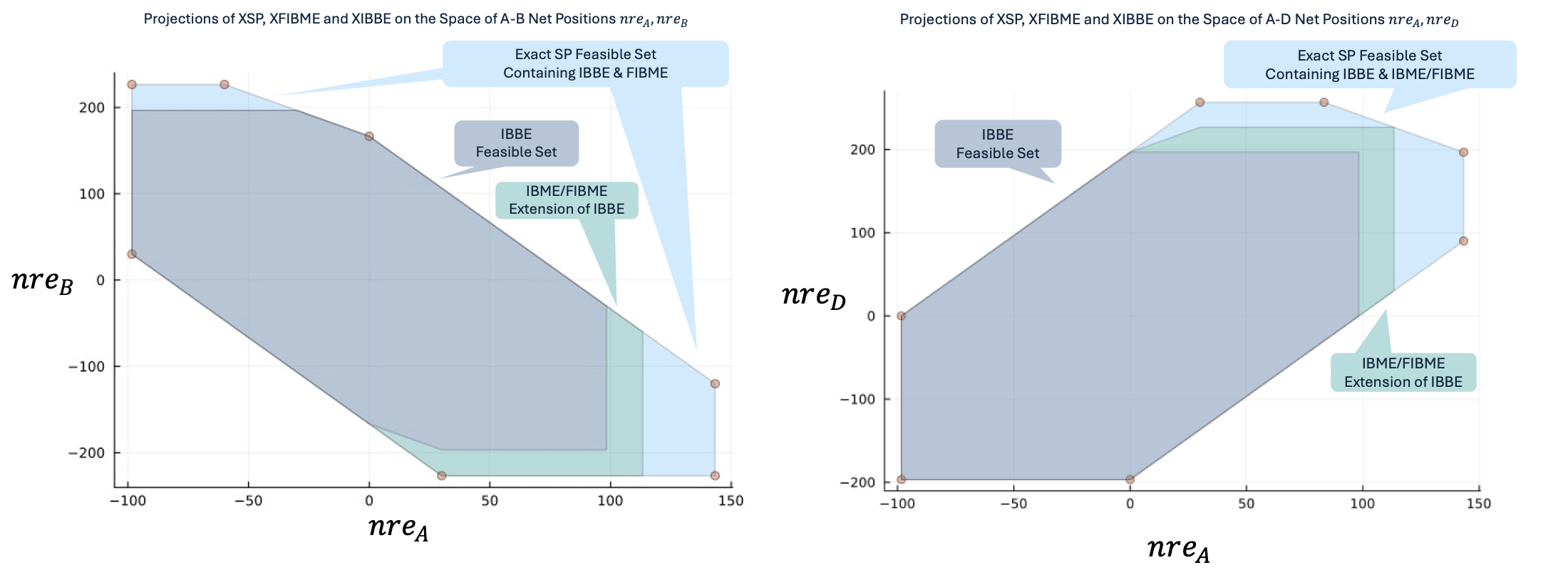}
    \caption{Illustration that IBME/FIBME are inner approximations of the feasible set of reserve deliverability, but less conservative than IBBE.}
    \label{fig:ibme-fibme-conservative}
\end{figure}

The numerical experiments presented in Section \ref{section:numerical-experiments} demonstrate that the formulations $IBME, FIBME$ already allow us to reap substantial benefits of secure cross-zonal exchanges of balancing capacity. 

Nevertheless, it would be interesting to investigate how to further leverage the inner approximation result of Theorem \ref{theorem-key-inner-approximation-theorem} to further alleviate this conservativeness. Various strategies and decomposition methods should allow us to consider the necessary cross-zonal exchanges $nr^i$ to combine in \eqref{eq:INAP-1}, together with adequate (non-unique) secure capacities $W_k^i$, see Theorem \ref{theorem-key-inner-approximation-theorem}. 

Such an analysis is however left for future work, although an exact approach for solving $SP$ that we plan to develop in an extension of this work is evoked at the end of Section \ref{section:cg-algo}.

\section{A column generation algorithm for IBME% and Comparison with a Custom Dantzig-Wolfe Decomposition
}

\label{section:cg-algo}
Given that the formulation $FIBME$ requires $|N|$ copies of the network model per reserve product when there are $|N|$ zones, it may not scale very well in practice. To address this challenge, we revert back to the semi-infinite linear programming formulation $IBME$. We are inspired to develop a decomposition algorithm by the fact that $IBME$ is a relaxation of $IBBE$ where we only consider bilateral exchanges of reserves between zones.

The idea of our proposed algorithm is to start with an optimal solution of $IBBE$ and identify missing columns $f_{v^l}$ in the formulation $IBME$, that would improve welfare if they were added. Such missing columns will correspond to violations of the dual constraints \eqref{robust_linear_constr}. 

The algorithm follows a standard column generation logic. Concretely, consider an optimal solution to $IBBE$ and its associated optimal dual variables, which include $\pi rnet_n$ dual to constraints \eqref{eq:XIBBE2} and $SP_k$ dual to constraints \eqref{eq:XIBBE3}, noting that the constraints \eqref{eq:XIBBE2}  are the same as \eqref{eq:XIBME1} and \eqref{eq:XIBBE3} the same as \eqref{eq:XIBME2} for this \emph{initial Restricted Master} $IBBE$ (see Proposition \ref{proposition:ibme_relaxation_of_ibbe} in Section \ref{section:multilateral-exchanges-formulation} on why $IBBE$ is a restricted version of $IBME$).

The pricing problem of our column generation algorithm identifies a vector of coefficients $v^l$ of a column $f_{v^l}$ corresponding to a violation of a robust dual constraint of the form \eqref{robust_linear_constr}. Such a column is obtained by solving the pricing problem \eqref{pricing-1}-\eqref{pricing-end}.

Note that at most a finite number of columns per robust constraint of type \eqref{robust_linear_constr} can be added, since the feasible set of the pricing problem for each $l\in N$ has a finite number of vertices, under the classic assumption that the pricing problems are solved with e.g., the (dual) simplex algorithm only returning optimal vertices or extreme rays. Note also that, at each iteration, as in a standard Dantzig--Wolfe decomposition, a dual (upper) bound can be obtained from the objective values of the current restricted master problem and the pricing problem; see, e.g., \cite{bazaraa2011linear} or \cite{conforti2014integer}. We however do not discuss this bound further here, since all instances in the numerical experiments presented in Section~\ref{section:numerical-experiments} are solved to optimality.

Algorithm \ref{alg:column_generation} below is a complete pseudocode description of the proposed column generation algorithm.

Numerical experiments with this approach are presented in Section \ref{section:numerical-experiments}.

Note that, although beyond the scope of what can be developed in this work, an exact, more general column generation algorithm also leveraging Theorem \ref{theorem-key-inner-approximation-theorem} can be developed, adequately generating vertices of the formulation $XSP$ while combining them via Theorem \ref{theorem-key-inner-approximation-theorem}. We plan to develop this general column generation algorithm, its proof of convergence and comparison to a Dantzig-Wolfe decomposition in an extension of the present work.

\begin{algorithm}%[H]
\caption{Column Generation Algorithm for Solving IBME}
\label{alg:column_generation}
\begin{algorithmic}[1]
\State Initialize the Restricted Master Problem (RMP) with the initial formulation $IBBE$ which is by design a restriction of $IBME$ with a subset of columns corresponding to bilateral exchanges (see Proposition \ref{proposition:ibme_relaxation_of_ibbe} in Section \ref{section:multilateral-exchanges-formulation}).
\State Initialize the lower bound $LB$ on the optimal value of $IBME$ by setting $LB:=ObjectiveValue(IBBE)$.
\Repeat
    \State Solve the current RMP, obtain the optimal primal and dual solutions, and update $LB$.
    \State Solve the pricing problem \eqref{pricing-1}-\eqref{pricing-end} for each $l \in N$ using the dual variable values $\pi rnet_n$ dual to constraints  \eqref{eq:XIBME1} and $SP_k$ dual to constraints \eqref{eq:XIBME2}.
    \If{at least one of the pricing problems takes a negative optimal objective value}
        \State Add the corresponding column $v^l$ and its associated decision variable $f_{v^l}$ to the RMP.
    \Else
        \State \textbf{Terminate}; the current RMP solution is optimal for the full master problem.
    \EndIf
\Until{no improving column exists}
\State \Return the optimal solution of the full master problem $IBME$.
\end{algorithmic}
\end{algorithm}

\section{Numerical Experiments}
\label{section:numerical-experiments}

The previously discussed models are applied to the CORE region transmission system for a specific hour over a 10-day period. The zones modeled include Austria, Belgium, Croatia, the Czech Republic, France, Germany, Hungary, Luxembourg, the Netherlands, Poland, Romania, Slovakia and Slovenia. Flow-based data are sourced from \cite{Jao}. Reserve requirements, their valuations for each zone and generator installed capacities are derived from the dataset used in \cite{Reserve}. The current experimental setup strictly focuses on the exchange of balancing capacity, excluding energy, and installed generation capacities are scaled down accordingly. The latter adjustment is driven by the fact that since only balancing capacity is considered, it is implicitly assumed that part of the generation capacity is already allocated to the energy market cleared separately. 

The models are implemented in Julia v.1.10.5 using JuMP v1.23.4, on a custom PC with an AMD Ryzen 9 7900X 12-Core Processor (4.70 GHz) on Windows 11 (64-bit). The chosen linear programming solver is Gurobi 11. 

We define a metric, the cross-zonal value capture, to quantify the extent to which each model captures the total value generated by the utilization of available cross-zonal capacity in the exact $SP$ formulation. This metric represents the fraction, captured by each model,
of the maximum possible value that can be generated by cross-zonal exchanges satisfying the reserve deliverability requirement. This maximum value achieved by the exact model $SP$ is obtained by comparing the welfare generated by $SP$ and a counterfactual scenario where there is no cross-zonal capacity (NCBC), i.e., by $Welfare_{SP} - Welfare_{NCBC}$. 

For each model, the metric is therefore defined as the ratio: $ U := \frac{Welfare_{model} - Welfare_{NCBC}}{Welfare_{SP} - Welfare_{NCBC}} \cdot 100\% $. The $SP$ model achieves the best value of 100 \%. The objective is to assess how much of the value generated by secure cross-zonal exchanges can be captured by the other models.

Table \ref{ComparisonB} reports on this metric, together with welfare and runtime results for each model across 10 instances. The runtime denotes the total computational time in seconds required by the solver to reach the optimal solution for each model. 

The results highlight significant differences in computational efficiency, as well as achieved welfare and cross-zonal value capture among the formulations. For instance, the compact formulation $IBBE$ is solved almost instantaneously, with a runtime in the order of milliseconds. However, its computational efficiency comes at a cost in terms of achieved welfare and cross-zonal value capture.

Table \ref{ComparisonB} demonstrates that the $IBME$ model, solved via the column generation algorithm outlined in Section \ref{section:cg-algo}, strikes the best balance between computational performance and cross-zonal value capture. Compared to $IBBE$, it substantially improves the value capture, from 5 to up to nearly 14 percentage points, while still being solved in the order of tens of milliseconds. Table \ref{ComparisonB} also shows that the welfare achieved by $IBME$ also consistently matches the welfare achieved by $FIBME$, providing an empirical confirmation of the exactness of the finite-dimensional reformulation $FIBME$ that is proved in Section \ref{section:finite-reformulation}. Note that the reported computational time for $IBME$ includes the first iteration, in which the initial column is generated from the dual values of $IBBE$ and thus, also accounts for the time to solve $IBBE$ itself for this warm start. As Table \ref{ComparisonB} shows, $FIBME$'s runtime is slightly worse than that of $IBME$, while still being of the same order of magnitude. 

Moreover, in 8 instances out of 10, $IBME$ and $FIBME$ allow to capture more than 97\% of the total value generated by cross-zonal exchanges in $SP$ while being solved in a fraction of the time required to solve the exact model $SP$, which has a runtime several orders of magnitude larger than the runtime needed to solve $IBME$ or $FIBME$.

\begin{table*}[htbp]
\centering
\caption{Comparison of IBME, FIBME, and bid-agnostic SP in extensive form across 10 instances.}
\renewcommand{\arraystretch}{1.6}
\setlength{\tabcolsep}{4pt}
\resizebox{\textwidth}{!}{%
\begin{tabular}{|c|c|ccc|cccc|ccc|ccc|}
\hline
\textbf{Inst.} 
& \multicolumn{1}{c|}{\textbf{NCBC}}
& \multicolumn{3}{c|}{\textbf{IBBE}}
& \multicolumn{4}{c|}{\textbf{IBME}}
& \multicolumn{3}{c|}{\textbf{FIBME}}
& \multicolumn{3}{c|}{\textbf{SP in Extensive Form }}
\\ \cline{2-15}
& \textbf{Welfare}
& \textbf{Welfare} & \textbf{U[\%]} & \textbf{Rt[s]}
& \textbf{Welfare} & \textbf{U[\%]} & \textbf{Rt[s]} & \textbf{Generated Cols}
& \textbf{Welfare} & \textbf{U[\%]} & \textbf{Rt[s]}
& \textbf{Welfare} & \textbf{U[\%]} & \textbf{Rt[s]}
\\ \hline
1 & 3,739,213 & 4,791,044 & 93.22 & 0.011 & 4,852,026 & 98.62 & 0.036 & 12 & 4,852,026 & 98.62 & 0.054 & 4,867,555 & 100.00 & 39.544 \\
2 & 3,739,213 & 4,858,146 & 92.47 & 0.015 & 4,917,515 & 97.37 & 0.046 & 9 & 4,917,515 & 97.37 & 0.046 & 4,949,312 & 100.00 & 114.803 \\
3 & 3,739,213 & 4,831,424 & 87.81 & 0.004 & 4,980,520 & 99.79 & 0.046 & 12 & 4,980,520 & 99.79 & 0.061 & 4,983,106 & 100.00 & 54.435 \\
4 & 3,739,213 & 4,684,407 & 78.27 & 0.005 & 4,751,876 & 83.86 & 0.037 & 8 & 4,751,876 & 83.86 & 0.062 & 4,946,796 & 100.00 & 89.229 \\
5 & 3,739,213 & 4,702,951 & 78.65 & 0.000 & 4,795,013 & 86.16 & 0.033 & 12 & 4,795,013 & 86.16 & 0.064 & 4,964,573 & 100.00 & 56.660 \\
6 & 3,739,213 & 4,814,779 & 90.84 & 0.003 & 4,920,563 & 99.77 & 0.018 & 8 & 4,920,563 & 99.77 & 0.062 & 4,923,278 & 100.00 & 157.510 \\
7 & 3,739,213 & 4,796,652 & 92.78 & 0.000 & 4,863,207 & 98.62 & 0.048 & 9 & 4,863,207 & 98.62 & 0.058 & 4,878,968 & 100.00 & 132.486 \\
8 & 3,739,213 & 4,940,160 & 84.75 & 0.000 & 5,140,811 & 98.91 & 0.018 & 11 & 5,140,811 & 98.91 & 0.051 & 5,156,286 & 100.00 & 42.306 \\
9 & 3,739,213 & 4,862,356 & 91.85 & 0.000 & 4,942,499 & 98.41 & 0.016 & 12 & 4,942,499 & 98.41 & 0.061 & 4,962,001 & 100.00 & 46.543 \\
10 & 3,739,213 & 4,964,283 & 94.08 & 0.000 & 5,033,855 & 99.42 & 0.044 & 8 & 5,033,855 & 99.42 & 0.068 & 5,041,435 & 100.00 & 141.641 \\
\hline
\end{tabular}
}
\label{ComparisonB}
\end{table*}

As mentioned in Section \ref{subsection:SP}, we also investigated decomposition techniques for the  bid-agnostic $SP$ formulation \eqref{eq:SP1}-\eqref{eq:SP6}, \eqref{eq:XSP1}-\eqref{eq:XSP9}, implementing a Dantzig-Wolfe (DW) decomposition (see \cite{conforti2014integer, bazaraa2011linear}).

To enable a meaningful comparison with the $IBME$ model, the termination criterion of the Dantzig-Wolfe decomposition was set to the point at which the lower bound first reaches or surpasses the $IBME$ objective value. The results show that the initial bid-aware $InitSP$ formulation \eqref{eq:InitSP1}-\eqref{eq:InitSP15} performs the worst, as it considers scenario-dependent bid acceptances in the second stage, which results in a greater number of variables and constraints than the bid-agnostic one. %The L-shaped approach still requires runtimes on the order of several hundred seconds, indicating its limited computational efficiency. 

Both the bid-agnostic $SP$ and its DW decomposition have comparable runtimes under the chosen termination criterion for the DW decomposition, but the model in extensive form has already attained the exact optimum, whereas the DW decomposition provides only a lower bound that remains within a small gap of it. 

Driving DW to full convergence would close this gap at the cost of additional runtime, so the extensive formulation is preferable to reach exact solutions to $SP$.

\begin{table*}[htbp]
\centering
\caption{Comparison of bid-agnostic $SP$ in extensive form, bid-aware $InitSP$, and $SP$ Dantzig-Wolfe across 10 instances, with $SP$ Dantzig-Wolfe stopped after reaching a solution as good as the one of $IBME$.}
\renewcommand{\arraystretch}{1.6}
\setlength{\tabcolsep}{4pt}
\resizebox{\textwidth}{!}{%
\begin{tabular}{|c|cc|cc|ccc|}
\hline
& \multicolumn{2}{c|}{\textbf{SP in Extensive Form}} 
& \multicolumn{2}{c|}{\textbf{Bid-aware InitSP}}
& \multicolumn{3}{c|}{\textbf{SP Dantzig-Wolfe}} \\ \hline
\textbf{Inst.} &
\textbf{Welfare [€]} & \textbf{Runtime [sec]} &
\textbf{Welfare [€]} & \textbf{Runtime [sec]} &
\textbf{LB [€]} & \textbf{Runtime [sec]} & \textbf{Columns} \\ \hline

1  & 4,867,555 & 39.544 & 4,867,555 & 932.906 & 4,866,871 & 117.119 & 6 \\

2  & 4,949,312 & 114.803 & 4,949,312 & 934.687 & 4,923,637 & 257.867 & 4 \\

3  & 4,983,106 & 54.435 & 4,983,106 & 974.172 &  4,983,064 & 100.582 & 5 \\

4  & 4,946,796 & 89.229 & 4,946,796 & 971.214 &  4,752,740 & 151.033 & 6 \\

5  & 4,964,573 & 56.660 & 4,964,573 & 986.012 &  4,795,319 & 351.732 & 12 \\

6  & 4,923,278 & 157.510 & 4,923,277 & 984.093 &  4,922,376 & 224.93 & 8 \\

7  & 4,878,968 & 132.486 & 4,878,965 & 956.745 & 4,874,025 & 79.334 & 4 \\

8  & 5,156,286 & 42.306 & 5,156,286 & 976.492 & 5,149,156 & 208.774 & 10 \\

9  & 4,962,001 & 46.543 & 4,962,001 & 928.494 & 4,953,845 & 154.365 & 6 \\

10 & 5,041,435 & 141.641 & 5,041,435 & 918.703 & 5,038,091 & 93.050 & 4 \\ \hline

\end{tabular}
}
\label{SP_combined}
\end{table*}

Finally, Table \ref{numbers} reports on the number of variables and constraints encountered in each model or solution method. It provides for instance indications on why solving the model $FIBME$ takes more time than solving $IBME$ via the column generation algorithm outlined in Section \ref{section:cg-algo}, given the substantially larger number of variables and constraints involved, since $FIBME$ considers $|N|$ copies of the original network model.

\begin{table*}[htbp]
\centering
\caption{Number of constraints and variables across models. Reported values correspond to the instances with the largest number of constraints and variables. For the column generation algorithms, the number of constraints and variables are determined based on the instance with the most columns added.}
\renewcommand{\arraystretch}{1.2}
\setlength{\tabcolsep}{3pt}
\resizebox{\textwidth}{!}{%
\begin{tabular}{|c|c|c|c|c|c|c|}
\hline
\textbf{Models} &  \textbf{IBBE} & \textbf{IBME} & \textbf{FIBME} & \textbf{SP in Extensive Form} & \textbf{Bid-aware InitSP}  & \textbf{SP Dantzig-Wolfe}\\ \hline %& \textbf{SP L-shaped}
\#\textbf{Constraints} & 3,368 & 3,315 & 6,543  & 818,902 &  12,947,879  &  793,722\\ \hline
\#\textbf{Variables} & 1,715 & 1,639 & 3,283 & 176,633 & 6,224,836 &  173,583\\ \hline
\end{tabular}
\label{numbers}
}
\end{table*}

\section{Conclusions}
\label{section:conclusions}

Reserve deliverability requires that balancing capacity be procured in such a way that any activation pattern of the balancing capacity demands can be managed by the network in real time. However, certain cross-zonal exchanges can take place only if other exchanges occur concurrently, while real-time activations of balancing capacity procured in day-ahead markets are inherently uncertain. Specific models and methods are therefore needed to ensure that reserves can always be delivered where they are needed.

While ensuring reserve deliverability in \emph{all} activation scenarios can be modeled as a stochastic programming (SP) problem, the size of the corresponding formulation grows exponentially with the number of locations in the network. This does not scale well in practice.

We have first shown that activation scenarios can be expressed in a way which is agnostic to the order book of the underlying market model. This reduces the challenge to a network modeling problem. It also already reduces substantially the size of the problem, leading to improved performances.

A general inner approximation theorem has then been introduced. It facilitates the derivation of both existing practitioner models as special cases, and novel, improved models that are less conservative while maintaining computational efficiency. Beyond enabling to reach better solutions in terms of welfare, the improved models can potentially substantially reduce balancing capacity procurement costs. This is of major importance for future Pan-European cross-zonal balancing capacity markets or co-optimization in the Single Day-ahead Coupling. These improvements however implicitly require using flow-based at this activation stage, while balancing energy platforms in Europe are currently based on ATC network models. Implementing flow-based at the activation stage, aligning the practice with the day-ahead balancing capacity procurement stage, is certainly an interesting evolution for European power markets in the years to come.

The new models which have been proposed specifically rely on a semi-infinite linear programming formulation tackled by a custom column generation algorithm, and an equivalent finite-dimensional reformulation of that model which leverages standard robust linear optimization techniques.

Numerical experiments have been conducted to compare the scalability and conservativeness of these inner approximations to the exact stochastic programming formulation of the reserve deliverability requirement. This exact formulation has been solved as an extensive form linear program but also via a standard Dantzig-Wolfe decomposition. These numerical experiments demonstrate that the inner approximations are much more scalable than the exact stochastic programming formulation, while reaping most of the value generated by cross-zonal exchanges in the exact formulation. 

Further leveraging the inner approximation Theorem \ref{theorem-key-inner-approximation-theorem} to derive scalable exact algorithms solving the exact formulation is an interesting venue for extensions of this work, as shortly evoked in Section \ref{section: general-inner-approx-theorem} and Section \ref{section:cg-algo}.

These approaches are of particular interest for future pan-European cross-zonal balancing capacity markets, and can also accommodate co-optimization of energy and balancing capacity products. 

Since the approaches are bid agnostic, we consider them to also be relevant in market clearing models beyond the European context, thereby significantly broadening the relevance of the work presented here.

% \newpage

\section*{Declaration of generative AI assistance.}
OpenAI’s Astra model has been used to examine the mathematical arguments in the first complete draft version of the manuscript deposited on arXiv (arXiv:2609.00439v1). This examination helped identify a gap in the original proof of Theorem 1 and formulate a shorter proof based on a direct elementary argument. It also helped identify and correct a minor issue in Case 1(a) of the proof of Lemma 1, using reasoning consistent with that employed in Case 1(b) and Case 2 (restoring a symmetry in the proofs of both cases). In addition, it helped identify that the model of \cite{PapavasiliouAvila2024} had been stated incorrectly in the Supplementary Material, resulting in a formulation incompatible with the equivalence claimed in Proposition 1 therein (now Proposition S1). The proposition remains valid when the original model is stated correctly. Inconsequential typos have also been identified and corrected. Full responsibility is taken for the resulting updates and the contents of this article.

\appendix

\vspace{-0.5cm}

\section{Lemmas and Proofs of Results Omitted in the Main Text} \label{appendix:proofs}

\bigskip

\textbf{Proof of Lemma \ref{lemma-netting_total_demand_supply}}

\vspace{-0.4cm}
\begin{proof}

We treat separately the following two cases in a symmetric way. Case 1 deals with the case where $nr_z \leq 0$, i.e. the zone is importing or has a zero net position, while Case 2 deals with the case where $nr_z >  0$, i.e. the zone is exporting.

 \textbf{Case 1}: $z$ imports reserves or has zero balance at the day-ahead stage, i.e. $nr^*_z = (TRS^*_z -  TRD^*_z)  \leq 0$. The case is proved in two parts (a) and (b).

    \textbf{Part (a).} Suppose that for all scenarios $s = s_1, ..., s_{|N|} \in \{0;1\}^{|N|}$, the second-stage (scenario-specific) net positions are such that $nr^*_z \leq nrAct^*_{z,s} \leq 0$.  Then necessarily, $nrAct^*_{z,s} = s_z nr^*_z$. 
    
    Indeed, (I) for scenarios $s$ with $s_z=1$, necessarily, $nrAct^*_{z,s} \leq nr*_z$. The reason, re-detailed again below in Case 1 (b) and Case 2, is that if $TRDAct_{z,s} = s_z TRD_{z} = TRD_{z}$, since the local supply in the second-stage $TRSAct_{z,s}$ is bounded by the supply matched in the first-stage $TRS_{z}$, the zone must continue to import (at least) the same as in the first stage. Combined with $nr^*_z \leq nrAct^*_{z,s}$, this leads to $nrAct^*_{z,s} = nr^*_z = s_z nr^*_z$ in this case. (II) If instead, $s_z = 0$, then necessarily, $TRDAct_{z,s} = 0$ and therefore, $nrAct_{z,s} = TRSAct_{z,s} - TRDAct_{z,s} \geq 0$, which combined with $nrAct^*_{z,s} \leq 0$ yields $nrAct^*_{z,s} = 0 = s_z nr^*_z$ in that case.

    Therefore, the assumption $nr^*_z \leq nrAct^*_{z,s} \leq 0$ ensures that none of the following new value definitions, as required by the Lemma statement, leads to a violation of the  $XSP$ constraints:
    
    \begin{itemize}
        \item $TRD_z^\# :=  TRD^*_z - TRS^*_z = -min(nr^*_z,0)$, 
        \item $TRDAct^\#_{z,s} := s_z TRD_z^\#$
        \item $TRS^\#_{z} :=0 = max(nr^*_z,0)$, 
        \item $TRSAct^\#_{z,s} :=0$
    \end{itemize}
    
    This ``netting" leaves the net injections unchanged, i.e., $nr^\#_z = nr^*_z, \ \  nrAct^\#_{z,s} = nrAct^*_{z,s}$ . Therefore, all constraints of $XSP$ remain satisfied, including the scenario-dependent flow-based constraints \eqref{eq:XSP8}.

    \textbf{Part (b).} We conclude this Case 1 in which it was assumed that $nr^*_z \leq nrAct^*_{z,s} \leq 0$, by showing that in each scenario, this assumption can indeed always be done.

    \textbf{(I)} Let us first show that one can always assume $nrAct^*_{z,s} \leq 0$. Suppose that for some scenario $\overline{s} = \overline{s_1}, ..., \overline{s_{|N|}} \in \{0;1\}^{|N|}$, we have $nrAct^*_{z,\overline{s}} > 0$, despite having the first-stage net position $nr^*_z \leq 0$.

    This is only possible if $\overline{s_z} = 0$. Indeed, otherwise, as long as $\overline{s_z} = 1$, i.e. the activated reserve remains equal to the procured reserve (i.e. $ TRDAct^*_{z,\overline{s}} = TRD^*_{z}$), zone $z$ must continue to import. This is because the supply  locally available is limited in each scenario by the first-stage supply available locally given the constraint $TRSAct^*_{z,s} \leq TRS^*_{z}$, and because this supply is not enough to cover the local demand, since by assumption of Case 1, $nr^*_z \leq 0$, and therefore $TRS^*_{z} \leq TRD^*_{z}$.

    Let us now show that the  position $nrAct^*_{z,\overline{s}} = 0$, trivially satisfying  $nrAct^*_{z,\overline{s}} \leq 0$ as needed, can be achieved under the same reserve activation scenario $\overline{s}$.

    First, note that from the point of view of the local balance of reserve demand and supply, this net position $nrAct^*_{z,\overline{s}} = 0$ can be achieved by reducing the value of $TRSAct^*_{z,s}$, thereby reducing the exports.
    
    The main remaining challenge is to show that such a net position $nrAct^*_{z,\overline{s}} = 0$ is compatible with the net positions of the other bidding zones that can be reached under the scenario $\overline{s}$ being considered. Note that to maintain global balance, these other net positions may need to be adapted. Moreover, since the scenario completely determines $TRDAct$ via \eqref{eq:XSP4}, the adaptations of these net positions correspond to adaptations of the $TRSAct$.

    Let us consider the ``complementary scenario" $s^C$ defined as $s^C_z =1$ and $s^C_l = \overline{s_l}, l \neq z$, where $\overline{s_z}=0$ is replaced by $s^{C}_z =1$, for which necessarily, $nrAct_{z,s^C} \leq 0$ for the exact same reasons as those highlighted above (limited available scenario-dependent supply $TRSAct^*_{z,s^C}$ constrained by $ TRSAct^*_{z,s^C} \leq TRS^*_{z}$ and $TRS^*_{z} \leq TRD^*_{z}$ by assumption of Case 1 where $nr^*_z \leq 0$).

    Since both  scenarios $\overline{s}, s^{C}$ are feasible under $XSP$, this means that the inequalities \eqref{eq:XSP4N}-\eqref{eq:XSP9N} below are feasible, whether the parameter $nrAct_{z}^{as-param}$ equals $nrAct^*_{z,\overline{s}} > 0$ or  $nrAct^*_{z,s^C} \leq 0$. 
    
    We now use Lemma 1 in \cite{geoffrion} stating the following. Let $g$ be a convex function and let $Y := \{ y \in \mathcal{R}^m: g(x) \leq y \texttt{\ for some\ } x\}$; then $Y$ is a convex set.
    
    The net position $nrAct^{new}_{z,\overline{s}} = 0$ is in the convex hull of these positions $nrAct^*_{z,\overline{s}} > 0$ and  $nrAct^*_{z,s^C} \leq 0$ for which the inequalities are feasible. By Lemma 1 in \cite{geoffrion} cited above, the inequalities therefore remain feasible for $nrAct_{z,s}^{as-param} := 0 = nrAct^{new}_{z,\overline{s}}$: This net position  is therefore also compatible with the net positions in other zones that can be reached under scenario $\overline{s}$, in the sense of leading to a feasible point of $XSP$, by adequately adjusting, if needed, the $TRSAct$ while satisfying \eqref{eq:XSP5N}. This concludes the proof for this case.

\begin{align} 
& TRDAct_{n,\overline{s}} = \overline{s}_n \ TRD_{n} &\forall n \neq z \in N \ (Second\ Stage) \taglabel{XSP4N} \\
& TRSAct_{n,\overline{s}} \leq TRS_{n} &\forall n \neq z \in N \  (Second\ Stage)  \taglabel{XSP5N} \\
&  nrAct_{n,\overline{s}} - TRSAct_{n,\overline{s}} + TRDAct_{n,\overline{s}} = 0 &\forall n \neq z \in N \  (Second\ Stage)  \taglabel{XSP6N} \\
& \sum_{n \neq z} nrAct_{n,\overline{s}} = - nrAct_{z}^{as-param} &\ (Second\ Stage) \taglabel{XSP7N} \\
& \sum_{n \neq z \in N}^{} PTDF_{k,n} nrAct_{n,\overline{s}}  \leq \ \widetilde{F_{k}^{\max}} - PTDF_{k,z} nrAct_{z}^{as-param} & \ (Second\ Stage) \taglabel{XSP8N} \\
& TRSAct, TRDAct \geq 0  &\ (Second\ Stage)\taglabel{XSP9N}
\end{align}

\textbf{(II)} Let us now show that one can always assume $nr^*_z \leq nrAct^*_{z,s}$. If for some scenario $\overline{s}$, $nrAct^*_{z,\overline{s}} < nr^*_z$, necessarily, the scenario is such that $\overline{s}_z = 1$,  since as discussed in Case 1 part (a) above, if $\overline{s}_z=0$, then $nrAct^*_{z,\overline{s}} \geq 0$.

In such a situation, one considers the complementary scenario defined as $s^C_z =0$ and $s^C_l = \overline{s_l}, l \neq z$, where $\overline{s_z}=1$ is replaced by $s^{C}_z =0$, necessarily implying $nrAct^*_{z,s^C} \geq 0$. This implies that \eqref{eq:XSP4N}-\eqref{eq:XSP9N} remains feasible whether $nrAct_{z}^{as-param} < nr^*_z$ or $nrAct_{z}^{as-param} \geq 0$.

As above, Lemma 1 in \cite{geoffrion} then ensures that there exists a solution where $nrAct^*_{z,\overline{s}} \geq nr^*_z$ for the initial scenario $s$ being considered, because \eqref{eq:XSP4N}-\eqref{eq:XSP9N} remains feasible for any $nrAct_{z}^{as-param}$ in the convex hull of $\{nrAct^*_{z,\overline{s}} < nr^*_z, nrAct^*_{z,s^C} \geq 0\}$.

\medskip

Case 2: $nr_z > 0$, i.e. the zone $z$ is exporting.

The treatment is very similar to Case 1 and we proceed again in two steps (a) and (b). 

(a) First, one shows that if $0 \leq nrAct^*_{z,s} \leq nr^*_z$ in each scenario, i.e. that there is no scenario where the export of the bidding zone is increasing in real time by leveraging the lower reserve activation $s_z\  TRDAct_{z,s} < TRD_{z}$, then the required netting is possible. 

%\stackrel{def}{=}
Indeed if $0 \leq nrAct^*_{z,s} \leq nr^*_z = TRS^*_z - TRD^*_z$, the following netting (cf. Lemma statement) is feasible. (1) Firstly, thanks to $nrAct^*_{z,s} \leq nr^*_z = TRS^*_z - TRD^*_z$, the newly defined $TRSAct^\#_{z,s} := TRSAct^*_{z,s} - TRDAct^*_{z,s}$ are ensured to satisfy $TRSAct^\#_{z,s} \leq TRS^\#_{z}$ in all scenarios, even when $TRDAct^\#_{z,s} = 0$, as required by $XSP$ (supply at the activation stage is bounded by supply at the day-ahead procurement stage). (2) Secondly, thanks to $0 \leq nrAct^*_{z,s}$, the newly defined $TRSAct^\#_{z,s}$ are ensured to satisfy $TRSAct^\#_{z,s} \geq 0$.

\begin{itemize}
        \item $TRD_z^\# := 0 $, 
        \item $TRDAct^\#_{z,s} := 0 $
        \item $TRS^\#_{z} := TRS^*_z - TRD^*_z  = max(nr^*_z,0)$, 
        \item $TRSAct^\#_{z,s} := TRSAct^*_{z,s} - TRDAct^*_{z,s}  \geq 0$
    \end{itemize}

(b) Then, using similar arguments as in Case 1, we show that in each scenario, one can indeed assume that $0 \leq nrAct_{z,s} \leq nr_z$, by relying again on  the consideration of "complementary scenarios".

If for some scenario $\overline{s}$, $nrAct^*_{z,\overline{s}} > nr^*_z$, necessarily, the scenario is such that $\overline{s}_z = 0$ since, similarly as discussed in Case 1 above, if $\overline{s}_z=1$, since $TRSAct^*_{z,\overline{s}} \leq TRS^*_{z}$, the bidding zone $z$ couldn't export more in such scenarios where $s_z = 1$ as this implies $TRDAct_{z,\overline{s}} =  TRD_z$. In such a situation, as in Case 1, one considers the complementary scenario defined as $s^C_z =1$ and $s^C_l = \overline{s_l}, l \neq z$, where $\overline{s_z}=0$ is replaced by $s^{C}_z =1$, necessarily implying $nrAct^*_{z,s^{C}} \leq nr^*_z$. This implies that \eqref{eq:XSP4N}-\eqref{eq:XSP9N} remains feasible whether $nrAct_{z}^{as-param} > nr^*_z$ or $nrAct_{z}^{as-param} \leq nr^*_z$.

As in Case 1, Lemma 1 in \cite{geoffrion} then ensures that there exists a solution where $nrAct^*_{z,\overline{s}} \leq nr^*_z$ for the initial scenario $s$ being considered, because \eqref{eq:XSP4N}-\eqref{eq:XSP9N} remains feasible for any $nrAct_{z}^{as-param}$ in the convex hull of $\{nrAct^*_{z,\overline{s}} > nr^*_z, nrAct^*_{z,s^C} \leq nr^*_z\}$. 

If $nrAct^*_{z,\overline{s}} < 0$, necessarily, $\overline{s}_z = 1$. One then considers the complementary scenario $s^C$ defined as $s^C_z =0$ and $s^C_l = \overline{s_l}, l \neq z$, where $\overline{s_z}=1$ is replaced by $s^{C}_z =0$, for which necessarily, $nrAct^*_{z,s^C} \geq 0$. With the same arguments as above, Lemma 1 in \cite{geoffrion} then ensures that $nrAct^*_{z,\overline{s}} \geq 0$ is also achievable for that scenario.\end{proof}

\textbf{Proof of Lemma \ref{lemma-shifting_total_demand_supply}}

\vspace{-0.4cm}

\begin{proof}
    Indeed, let us consider the two possible activations  $s_z\in \{0;1\}$ of the new total demand $TRD^\#_z = (TRD^*_z + K)$ in zone $z$ in any scenario $s$.
    
    \begin{itemize}
        \item For $s_z  = 1 $, $TRDAct^\#_{z,s} = TRD^\#_{z} = (TRD^*_z + K) $ and one just needs to consider accordingly the scenario-dependent supply $TRSAct^\#_{z,s} := (TRSAct^*_{z,s} + K)$, and for all other zones $l \neq z$,  $TRDAct^\#_{l,s} = TRDAct^*_{l,s}$, $TRSAct^\#_{l,s} = TRSAct^*_{l,s}$, which leaves all scenario-dependent net positions $nrAct_{z,s}$ unchanged, thereby maintaining feasibility for $XSP$.
        
        \item For  $s_z  = 0$, by definition of the scenario, $TRDAct^\#_{z,s} = 0 = TRDAct^*_{z,s}$, and one then just needs to consider the scenario-dependent supply $ TRSAct^\#_{z,s} := TRSAct^*_{z,s}$ which was associated with $TRDAct^*_{z,s}$ in the original case before shifting  both demand and supply values by $K$, while for all other zones $l \neq z$,  $TRDAct^\#_{l,s} = TRDAct^*_{l,s}$, $TRSAct^\#_{l,s} = TRSAct^*_{l,s}$. This again leaves all scenario-dependent net positions $nrAct_{z,s}$ unchanged, in the sense that $nrAct^\#_{z,s} = nrAct^*_{z,s}$, thereby maintaining feasibility for $XSP$.
    \end{itemize} \end{proof}

\vspace{-0.5cm}

\textbf{Proof of Proposition \ref{proposition-initsp-sp}}
\vspace{-0.4cm}

\begin{proof}
    
Showing that  $Proj_{dr, pr, nr}(InitSP) \subseteq Proj_{dr, pr, nr}(SP)$ is direct. Given a point \\ $(dr, pr, nr, nrAct, drAct, prAct) \in InitSP$, we need to build a point of $SP$ having same first-stage decision values $(dr, pr, nr)$. This requires finding corresponding values for $TRD_z$, $TRS_z$, $TRDAct$, $TRSAct$, to obtain a point in $SP$. 

For that purpose, the equations \eqref{eq:TRDSupply1}-\eqref{eq:TRDSupply4}, can be used, which besides defining adequate values for $TRD_z$, $TRS_z$ via \eqref{eq:TRDSupply1}-\eqref{eq:TRDSupply2}, also define via \eqref{eq:TRDSupply3}-\eqref{eq:TRDSupply4} values for second-stage decisions $TRDAct_{z,s}$, $TRSAct_{z,s}$ \emph{for each scenario $s \in [0;1]^{|RDB|}$, i.e. for each scenario of activation of the bids $dr$}.

We can then directly obtain values $TRDAct_{z,s}$, $TRSAct_{z,s}$ for activation scenarios of $TRD_z$ as defined in the formulation of $XSP$ (where in the second stage, $TRD_z$ is either fully activated or not activated at all), by considering a subset of those scenarios of $InitSP$ defined in terms of the $dr$ activations. Indeed, the extreme activation scenarios of $TRD_z$ correspond to the scenarios where all bids $dr_r$ of a same zone $z$ are activated, in which case $TRDAct_{z,s} = \sum_{r | z(r) = z} drAct_{r,s}$ equals $TRD_z$ (full activation), or where none of them is activated, in which case $TRDAct_{z,s} = \sum_{r | z(r) = z} drAct_{r,s}$ equals 0 (no activation).

\begin{align}
    &TRD_z = \sum_{r | z(r) = z} dr_r \taglabel{TRDSupply1} \\
    &TRS_{z}  = \sum_{g | z(g) = z} pr_g \taglabel{TRDSupply2}
\end{align}

And $\forall s=(s_1, ..., s_r, ..., s_{|RDB|}) \in \{0;1\}^{|RDB|}$
\begin{align}
    &TRDAct_{z,s} = \sum_{r | z(r) = z} drAct_{r,s} \taglabel{TRDSupply3} \\
    &TRSAct_{z,s} = \sum_{g | z(g) = z} prAct_{g,s} \taglabel{TRDSupply4}
\end{align}

Showing that  $Proj_{dr, pr, nr}(SP) \subseteq Proj_{dr, pr, nr}(InitSP)$ requires more attention. In particular, we need to show that the introduction of the auxiliary variables $TRD$, $TRS$, $TRDAct$, $TRSAct$ does not make feasible net injection variable values $nr$ that would not be admissible under $InitSP$.

We would like to construct a point of $InitSP$ having the same values $dr, pr, nr$ as the original point of $SP$. The values of $pr$, $dr$ and $nr$ are already determined in the $SP$ solution but we need to set values for the variables $drAct$, $prAct$ (which in turn fully determine the $nrAct$) by leveraging the values of $TRDAct$, $TRSAct$.

\emph{To proceed, we first address the fact that in the $SP$ solution, $TRD_z, TRS_{z}$ may not exactly match the sum of the variables $dr, pr$ as in \eqref{eq:TRDSupply1}-\eqref{eq:TRDSupply2}, since this is not explicitly enforced by $SP$}. By leveraging the netting Lemma \ref{lemma-netting_total_demand_supply} and the shifting Lemma \ref{lemma-shifting_total_demand_supply}, we redefine $TRD$ and $TRS$ (first-stage decisions in $SP$) to make them satisfy the equations \eqref{eq:TRDSupply1}-\eqref{eq:TRDSupply2}, without invalidating the satisfaction of the $SP$ constraints.  Let us consider a feasible point of $SP$. By Lemma \ref{lemma-netting_total_demand_supply}, we can assign to all the auxiliary variables $TRD_z, TRS_z$ new netted values $TRD_z^\#$, $TRS_z^\#$ according to the equations (\ref{eq:XSP-Netting1})-(\ref{eq:XSP-Netting2}) while maintaining feasibility for $SP$ (i.e. there exist corresponding values $TRDAct_{z,s}^\#$, $TRSAct_{z,s}^\#$, $nrAct_{z,s}^\#$ for the scenario-dependent variables).  By Lemma \ref{lemma-shifting_total_demand_supply}, we can then shift the values of these auxiliary variables to make them correspond respectively to $\sum_{r | z(r) = z} dr_r$ and $\sum_{g | z(g) = z} pr_g$, all of this leaving the values of $dr, pr, nr$ unchanged.

Now consider a scenario $s=(s_1, ..., s_r, ..., s_{|RDB|}) \in \{0;1\}^{|RDB|}$, for which we need to determine the $drAct$, $prAct, nrAct$ to obtain a point of $InitSP$ having the same first-stage decision values $dr, pr, nr$ as a given point of $SP$. Firstly, this scenario fully determines the values $drAct$ given the constraints \eqref{eq:InitSP10} requiring that $drAct_{r,s} = s_r dr_r, \forall r \in RDB$.

It remains to show that one can then define values $prAct$ satisfying \eqref{eq:InitSP11}, such that the net positions $nrAct$ implied via \eqref{eq:InitSP12} satisfy the constraints \eqref{eq:InitSP13}-\eqref{eq:InitSP14}.

Based on the values $drAct_{r,s} = s_r dr_r$, values $TRDAct_{z,s}$ can be set via  equations \eqref{eq:TRDSupply3}, which may correspond to a non-extreme activation scenario of the $TRD_z$, i.e. which may be such that for some $z$, $0 < TRDAct_{z,s} < TRD_z$. It could also correspond to an extreme scenario where for each $z$,  $TRDAct_z=TRD_z$ or $TRDAct_z=0$.

(a) If it corresponds to an extreme activation scenario of $TRD$, by definition of $XSP$, we know that there exist values $TRSAct, nrAct$ such that all constraints of $XSP$ are satisfied. Therefore, any set of values $prAct_{g,s} \leq pr_g$ that satisfy \eqref{eq:TRDSupply4} will ensure that \eqref{eq:InitSP12}, \eqref{eq:InitSP13}-\eqref{eq:InitSP14} are satisfied. Such a set of values can always be determined, since $TRSAct_{z,s} \leq TRS_z$ and since the $TRS_z$ satisfy \eqref{eq:TRDSupply2} after applying if needed, as described above, Lemma \ref{lemma-netting_total_demand_supply} and Lemma \ref{lemma-shifting_total_demand_supply}.

(b) If it corresponds to a non-extreme activation scenario of $TRD$, the same arguments still apply, given Proposition \ref{prop:finite-set-scenarios} ensuring that adequate $TRSAct, nrAct$ also exist as responses to non-extreme activation scenarios of the $TRD$, if they exist for extreme activation scenarios. \end{proof}

\vspace{-0.2cm}

\textbf{Proof of Theorem \ref{theorem-key-inner-approximation-theorem}}
    \vspace{-0.2cm}
\begin{proof}

    We need to show that if $nr$ satisfies the constraints \eqref{eq:INAP-1}-\eqref{eq:INAP-3} with the $W^i_k $ as defined in Definition \ref{def:secure-czc}, then $nr \in  Proj_{nr}(XSP)$ (there is a point of $XSP$ having the same first-stage net injection variables).

    This requires that there exist variable values $TRD$, $TRS$, $TRDAct$, $TRSAct$, $nrAct$ such that all constraints \eqref{eq:XSP3}-\eqref{eq:XSP9} are satisfied, in particular constraints \eqref{eq:XSP4}-\eqref{eq:XSP9} for each scenario $s$.

    Let us check that such values are directly obtained from the values $TRD^i$, $TRS^i$, $TRDAct^i$, $TRSAct^i$, $nrAct^i$ whose existence follows from the fact that $nr^i  \in Proj_{nr}(XSP[F_{k}^{\max} \rightarrow W^i_k])$, since the $W_k^i$ are secure capacities for the net position vectors $nr^i$ (see Definition \ref{def:secure-czc}).

    Given $nr^i  \in Proj_{nr}(XSP[F_{k}^{\max} \rightarrow W^i_k])$ and Definition \ref{def:secure-czc}, there exist variable values $TRD^i$, $TRS^i$ and their second-stage counterparts $TRDAct^i$, $TRSAct^i$, $nrAct^i$  such that, after replacing $F_{k}^{\max}$ by the $W_k^i$ in the right-hand sides of \eqref{eq:XSP8}, constraints \eqref{eq:XSP3}-\eqref{eq:XSP9} are all satisfied.

    Let us define the values $TRD$, $TRS$, $TRDAct$, $TRSAct$ as follows, by summing up the corresponding values corresponding to the $nr^i$ (where the assignments are compactly written in vector notation):

    \vspace{-0.6cm}
\begin{align}
& TRD := \sum_{i= 1, ..., T} TRD^i f_i \\
& TRS := \sum_{i= 1, ..., T} TRS^i f_i \\
& TRDAct := \sum_{i= 1, ..., T} TRDAct^i f_i \\
& TRSAct := \sum_{i= 1, ..., T} TRSAct^i f_i
\end{align}

Given how values $TRD$, $TRS$, $TRDAct$, $TRSAct$ have been set above, and given that the values $nrAct^i$, $TRDAct^i$, $TRSAct^i$ satisfy \eqref{eq:XSP3}-\eqref{eq:XSP9} once $F_{k}^{\max}$ are replaced by the $W_k^i$ in the right-hand sides of \eqref{eq:XSP8}, one directly checks the validity of \eqref{eq:SINAP-1}-\eqref{eq:SINAP-2}, with \eqref{eq:SINAP-2} implying \eqref{eq:XSP8} for all scenarios. Likewise, a direct verification shows that the values $TRD$, $TRS$, $TRDAct$, $TRSAct$, $nrAct$ set above are such that \eqref{eq:XSP3}-\eqref{eq:XSP9} are all satisfied, thereby proving that $nr \in Proj_{nr}(XSP)$, since satisfaction of the remaining constraints \eqref{eq:XSP1}-\eqref{eq:XSP2} can straightforwardly be checked.

\vspace{-0.6cm}
\begin{align}
& \begin{pmatrix}
nrAct_{1,s} \\
\vdots \\
nrAct_{z,s} \\
\vdots \\
nrAct_{|N|,s}
\end{pmatrix}
:=\sum_{i= 1, ..., T} \begin{pmatrix} 
nrAct^i_{1,s} \\
\vdots \\
nrAct^i_{z,s} \\
\vdots \\
nrAct^i_{|N|,s}
\end{pmatrix} f_i \taglabel{SINAP-1} \\
& \sum_z PTDF_{k,z} nrAct_{z,s} = \sum_z \sum_i  PTDF_{k,z} nrAct^i_{z,s} f_i \leq  \sum_i W^{i}_k \ f_i \leq \widetilde{F_k^{max}} & \forall k \in CNE \taglabel{SINAP-2}
\end{align}

\end{proof}

\vspace{-1cm}

\bibliographystyle{apalike-ejor}

\bibliography{template}

\newcount\savedpage
\savedpage=\value{page}

\clearpage

\begingroup
  \pagestyle{empty}

  \includepdf[
    pages=-,
    fitpaper=true,
    pagecommand={}
  ]{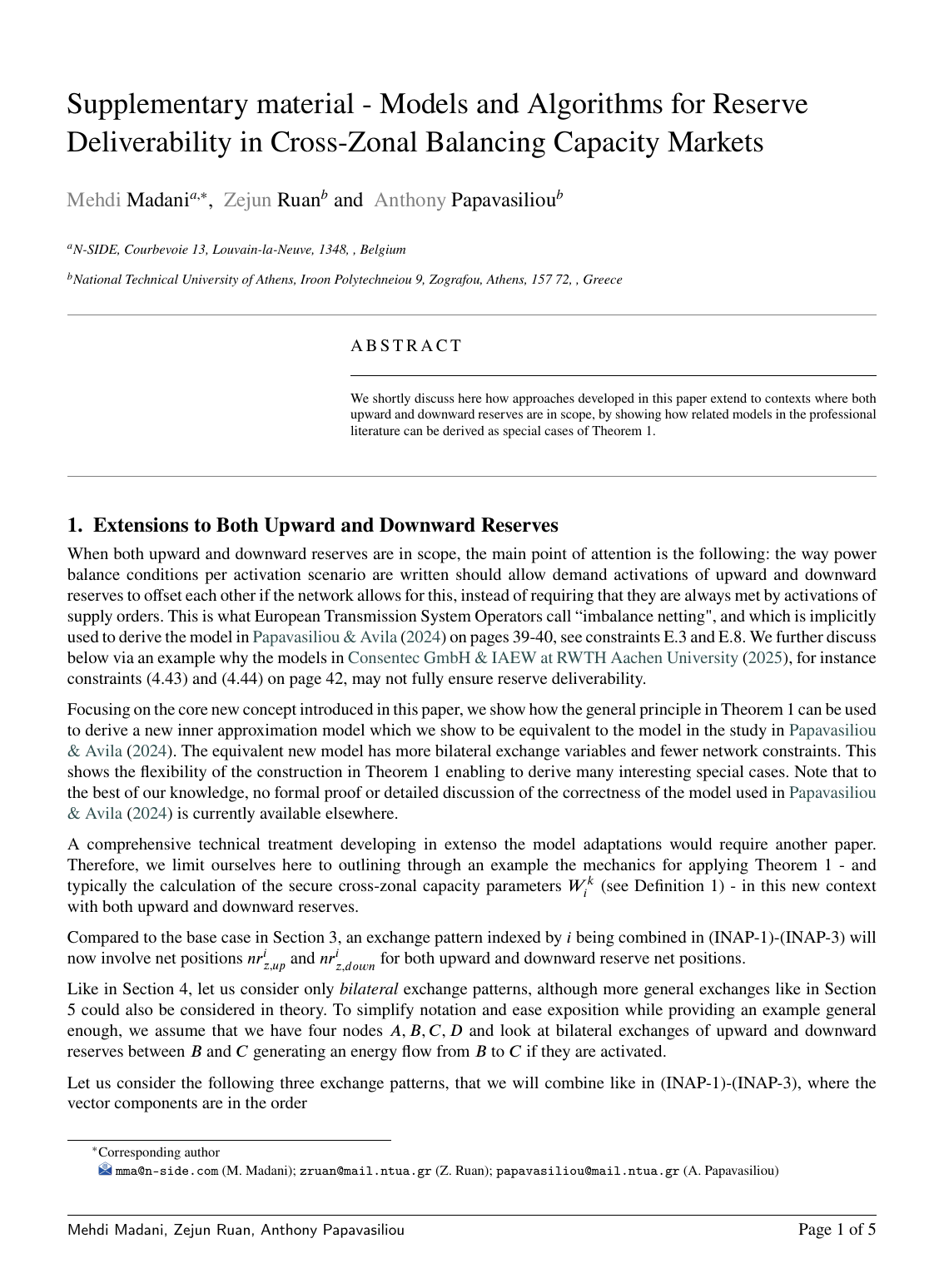}

\endgroup

\setcounter{page}{\savedpage}

\end{document}